\documentclass[11pt]{article}
\usepackage{amsmath,amsfonts,amssymb,amsthm}
\usepackage{enumerate}
\usepackage{esint}
\usepackage[pagebackref,colorlinks,citecolor=blue,linkcolor=blue]{hyperref}
\usepackage[dvipsnames]{xcolor}
\usepackage{mathtools}  

\usepackage{comment}

\usepackage{geometry}
\geometry{verbose,tmargin=3cm}

\numberwithin{equation}{section}
\theoremstyle{plain}
\newtheorem{theorem}[equation]{Theorem}
\newtheorem{corollary}[equation]{Corollary}
\newtheorem{lemma}[equation]{Lemma}
\newtheorem{proposition}[equation]{Proposition}

\theoremstyle{definition}
\newtheorem{definition}[equation]{Definition}

\newtheorem{example}[equation]{Example}
\newtheorem{remark}[equation]{Remark}

\numberwithin{equation}{section}

\newcommand{\Z}{{\mathbb Z}}
\newcommand{\R}{{\mathbb R}}
\newcommand{\N}{{\mathbb N}}

\newcommand{\Om}{\Omega}

\providecommand{\vint}[1]{\mathchoice
          {\mathop{\vrule width 5pt height 3 pt depth -2.5pt
                  \kern -9pt \kern 1pt\intop}\nolimits_{\kern -5pt{#1}}}
          {\mathop{\vrule width 5pt height 3 pt depth -2.6pt
                  \kern -6pt \intop}\nolimits_{\kern -3pt{#1}}}
          {\mathop{\vrule width 5pt height 3 pt depth -2.6pt
                  \kern -6pt \intop}\nolimits_{\kern -3pt{#1}}}
          {\mathop{\vrule width 5pt height 3 pt depth -2.6pt
                  \kern -6pt \intop}\nolimits_{\kern -3pt{#1}}}}

\newcommand{\eps}{\varepsilon}
\newcommand{\loc}{\mathrm{loc}}

\newcommand{\BV}{\mathrm{BV}}

\newcommand{\ch}{\text{\raise 1.3pt \hbox{$\chi$}\kern-0.2pt}}

\DeclareMathOperator{\Mod}{Mod}

\DeclareMathOperator{\dist}{dist}
\DeclareMathOperator{\diam}{diam}

\DeclareMathOperator{\Lip}{Lip}

\DeclareMathOperator{\supp}{spt}

\begin{document}
\title{A characterization of BV and Sobolev functions\\
	via nonlocal functionals in metric spaces
\footnote{{\bf 2020 Mathematics Subject Classification}: 46E36, 26B30
\hfill \break {\it Keywords\,}: Sobolev function, function of bounded variation,
metric measure space, nonlocal functional, Poincar\'e inequality
}}
\author{Panu Lahti, Andrea Pinamonti, and Xiaodan Zhou}

\maketitle

\begin{abstract}
We study a characterization of BV and Sobolev functions via nonlocal functionals in metric spaces equipped with a doubling
measure and supporting a Poincar\'e inequality.
Compared with previous works, we consider more general functionals.
We also give a counterexample in the case $p=1$ demonstrating that unlike in Euclidean spaces,
in metric measure spaces the limit of the nonlocal functions is only comparable, not necessarily equal,
to the variation measure $\Vert Df\Vert(\Om)$.
\end{abstract}

\section{Introduction}

Consider a sequence $\{\rho_i\}_{i=1}^{\infty}$ of nonnegative functions in $L^1(\R^n)$, $n\ge 1$,
which are radial (i.e. only depend on $|x|$) and for which
\begin{equation}\label{eq:intro rhoi conditions}
\int_{\R^n}\rho_i(x)\,dx=1\ \textrm{ for all }i\in\N
\quad\textrm{and}\quad
\lim_{i\to \infty}\int_{|x|>\delta}\rho_i(x)\,dx=0\quad\textrm{for all }\delta>0.
\end{equation}
For an open set $\Om\subset\R^n$ and function $f\in W_{\loc}^{1,p}(\Om)$, we define the Sobolev seminorm by
\[
|f|_{W^{1,p}(\Om)}:=\left(\int_{\Om}|\nabla f|^p\,dx\right)^{1/p},
\]
and if $f\notin W_{\loc}^{1,p}(\Om)$, then we let $|f|_{W^{1,p}(\Om)}=\infty$.
Bourgain, Brezis, and Mironescu \cite[Theorem 3]{BBM} showed that when $\Om\subset\R^n$ is a smooth, bounded domain and
$1<p<\infty$, then for every $f\in L^p(\Om)$ we have
\[
\lim_{i\to\infty}\int_{\Om}\int_{\Om}\frac{|f(x)-f(y)|^p}{|x-y|^p}\rho_i(|x-y|)\,dx\,dy=K_{p,n}|f|^p_{W^{1,p}(\Om)};
\]
here $K_{p,n}$ is a constant depending only on $p,n$.
D\'avila \cite{Dav} generalized this result to functions of bounded variation (BV functions)
$f$ and their variation measures $\Vert Df\Vert$.
He showed that when $\Om\subset\R^n$ is a bounded domain with Lipschitz boundary,
then for every $f\in L^1(\Om)$ we have
\[
\lim_{i\to\infty}\int_{\Om}\int_{\Om}\frac{|f(x)-f(y)|}{|x-y|}\rho_i(|x-y|)\,dx\,dy
=K_{1,n}\Vert Df\Vert(\Om),
\]
where we understand $\Vert Df\Vert(\Om)=\infty$ if $f\notin \BV(\Om)$.
To unify the notation, we define the energy
\[
E_{f,p}(\Om)
:=\begin{cases}
\Vert Df\Vert(\Om) &\textrm{when }p=1,\\
\int_{\Om}|\nabla f|^p\,dx &\textrm{when }1<p<\infty.
\end{cases}
\]
Several different generalizations of these results in Euclidean spaces
have been considered e.g.
by Ponce \cite{Pon}, Leoni--Spector \cite{LS1,LS2},
Brezis--Van Schaftingen--Yung \cite{Bre2,Bre3,Bre4},
Nguyen--Pinamonti--Vecchi--Squassina \cite{NPVS,PVS1},  Nguyen \cite{N1}, Brezis--Nguyen \cite{BN1,BN2, BN3},  Garofalo--Tralli \cite{GT, GT1} and Comi-Stefani \cite{CS,CS1,CS2}.

Brezis \cite[Remark 6]{Bre} suggested generalizing the theory to more general metric measure spaces $(X,d,\mu)$.
One generalization was given by Di Marino--Squassina \cite{DiMaS},
who assumed the measure $\mu$ to be doubling and the space to support a $(p,p)$--Poincar\'e inequality.
Such spaces are often called PI spaces.
We will give definitions in Section \ref{sec:definitions}.
They considered the mollifiers
\[
\rho_s(x,y):=(1-s)\frac{1}{d(x,y)^{ps}\mu(B(y,d(x,y)))},\quad x,y\in X,\quad 0<s< 1,
\]
and showed in \cite[Theorem 1.4]{DiMaS}
that for a constant $C\ge 1$ and for every $f\in L^p(X)$, we have
\begin{equation}\label{eq:DiMarinoSquassina}
\begin{split}
&C^{-1}E_{f,p}(X)
\le \liminf_{s\nearrow 1}(1-s)\int_{X}\int_{X}\frac{|f(x)-f(y)|^p}{d(x,y)^{ps}\mu(B(y,d(x,y)))}\,d\mu(y)\,d\mu(x)\\
&\qquad \le \limsup_{s\nearrow 1}(1-s)\int_{X}\int_{X}\frac{|f(x)-f(y)|^p}{d(x,y)^{ps}\mu(B(y,d(x,y)))}\,d\mu(y)\,d\mu(x)
\le CE_{f,p}(X).
\end{split}
\end{equation}
A similar result was proved previously in Ahlfors-regular spaces in \cite{Mun}.
G\'orny \cite{Gor}, resp. Han--Pinamonti \cite{HP}, studied the problem in certain PI spaces that ``locally
look like'' Euclidean spaces, resp. finite-dimensional Banach spaces or Carnot groups,
and showed that for every $f\in N^{1,p}(X)$, with $1<p<\infty$, we have
\[
\lim_{r\to 0}\frac{1}{r^p}\int_X \vint{B(y,r)}|f(x)-f(y)|^p\,d\mu(x)\,d\mu(y)=C E_{f,p}(X).
\]
These results correspond to certain choices of the mollifiers $\rho_i$ satisfying
\eqref{eq:intro rhoi conditions}.
In the current paper, our main goal is to study this problem
for more general mollifiers $\rho_i$, of which the mollifiers considered in
\cite{DiMaS,Gor,Mun} are special cases.
Moreover, we consider domains $\Om\neq X$.

Our main result is the following.

\begin{theorem}\label{thm:main}
	Let $1\le p<\infty$, and suppose $\mu$ is doubling and $X$ supports a $(p,p)$--Poincar\'e inequality.
	Suppose $\{\rho_i\}_{i=1}^{\infty}$ is a sequence of mollifiers satisfying conditions
	\eqref{eq:rho hat minorize}--\eqref{eq:radius one condition}.
	Suppose $\Om\subset X$ is a strong $p$-extension domain, and let $f\in L^p(\Om)$. Then
	\begin{equation}\label{eq:main theorem equation}
	\begin{split}
	C_1 E_{f,p}(\Om)
	&\le \liminf_{i\to\infty}\int_{\Om}\int_{\Om} \frac{|f(x)-f(y)|^p}{d(x,y)^p}\rho_i(x,y)\,d\mu(x)\,d\mu(y)\\
	&\le \limsup_{i\to\infty}\int_{\Om}\int_{\Om} \frac{|f(x)-f(y)|^p}{d(x,y)^p}\rho_i(x,y)\,d\mu(x)\,d\mu(y)
	\le C_2 E_{f,p}(\Om)
	\end{split}
	\end{equation}
	for some constants $C_1\le C_2$ that depend only on $p$,
	the doubling constant of the measure, the constants in the
	Poincar\'e inequality, and the constant $C_{\rho}$ associated with the mollifiers.
\end{theorem}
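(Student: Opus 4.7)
The plan is to prove the two inequalities in \eqref{eq:main theorem equation} separately. For the upper bound I would reduce to the case $\Om=X$ via the strong $p$-extension hypothesis, and for the lower bound I would invoke the Poincar\'e inequality to convert nonlocal information into pointwise upper-gradient-type data. In both directions, the role of the mollifier conditions \eqref{eq:rho hat minorize}--\eqref{eq:radius one condition} is to relate $\rho_i(x,y)$ to characteristic functions of balls of suitably chosen radii, so that doubling translates the nonlocal kernel into averages $\vint{B(x,r)}\cdot\, d\mu$ at an appropriate scale.

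For the upper bound $\limsup\le C_2 E_{f,p}(\Om)$, I would assume $E_{f,p}(\Om)<\infty$ (otherwise nothing to prove) and use the strong $p$-extension property to extend $f$ to $F$ on $X$ with $E_{F,p}(X)\le C E_{f,p}(\Om)$. The chaining argument built on the $(p,p)$--Poincar\'e inequality gives the Haj\l asz-type pointwise estimate
\[
|F(x)-F(y)|\le C\,d(x,y)\bigl(\M_{\lambda d(x,y)}g^p(x)^{1/p}+\M_{\lambda d(x,y)}g^p(y)^{1/p}\bigr)
\]
for an upper gradient $g$ of $F$ (and the density of $\Vert DF\Vert$ when $p=1$). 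Inserting this into the integrand, expanding, using Fubini, and controlling $\int \rho_i(x,y)\,d\mu(y)$ by the ``radius one'' and integrability conditions reduces the matter to the standard $L^p$-bound for the maximal function when $p>1$. For $p=1$ one first establishes the estimate for Lipschitz $f$ and then uses the lower semicontinuity of $\Vert Df\Vert$ together with approximation by locally Lipschitz functions having $\int \lip f\,d\mu\to \Vert Df\Vert(\Om)$.

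For the lower bound $\liminf\ge C_1 E_{f,p}(\Om)$, assuming the liminf is finite, I would introduce approximate upper gradients at scales $r_i\downarrow 0$ of the form
\[
a_{r_i}f(x):=\Bigl(\vint{B(x,r_i)}\frac{|f(x)-f(y)|^p}{r_i^p}\,d\mu(y)\Bigr)^{1/p}.
\]
The minorization \eqref{eq:rho hat minorize}, combined with doubling, should bound $\int_\Om a_{r_i}f(x)^p\,d\mu(x)$ (or its dyadic analogue) from below by the nonlocal functional at step $i$, showing that $\{a_{r_i}f\}$ is bounded in $L^p(\Om)$. For $p>1$, weak compactness provides a limit $g\in L^p(\Om)$, and one checks via the Poincar\'e inequality and Mazur's lemma that $g$ is a $p$-weak upper gradient of $f$, hence $E_{f,p}(\Om)\le C\int g^p\,d\mu$. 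For $p=1$, one instead shows that $f$ has bounded variation using the characterization of $\BV$ via equi-integrability of discrete derivatives and the compactness in $L^1$ provided by the doubling and Poincar\'e hypotheses, and bounds $\Vert Df\Vert(\Om)$ by the liminf using the lower semicontinuity of the variation measure.

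I expect the main obstacle to be the lower bound in the endpoint case $p=1$: weak $L^1$-compactness of $\{a_{r_i}f\}$ fails, so identifying a limit object as the variation measure must go through $\BV$ compactness and a careful approximation-and-relaxation argument, performed uniformly in the mollifier. A secondary difficulty is that working with a general $\Om\ne X$ couples the extension used in the upper bound with the cut-off/localization used in the lower bound, and both must remain compatible with the strong $p$-extension assumption; this is also where the counterexample for $p=1$ mentioned in the abstract makes it clear that one cannot hope for $C_1=C_2$ without further hypotheses.
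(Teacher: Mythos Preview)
Your overall two-part plan is the same as the paper's, and your lower-bound sketch (approximate gradients, weak $L^p$-compactness, Mazur's lemma) is close in spirit to what the authors do with discrete convolutions. But there are two genuine gaps in the upper bound, and one misattribution in the lower bound, that you should be aware of.

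\medskip
\textbf{Upper bound: the extension hypothesis.} You write ``use the strong $p$-extension property to extend $f$ to $F$ on $X$ with $E_{F,p}(X)\le C E_{f,p}(\Om)$''. The paper's Definition~\ref{def:strong BV extension} does \emph{not} give a bounded extension operator; it only guarantees an extension $F$ with $E_{F,p}(\partial\Om)=0$. In particular you cannot conclude $E_{F,p}(X)\le C E_{f,p}(\Om)$. The paper deals with this by splitting $\Om\times\Om$ into (i) pairs with $d(x,y)\ge R$, killed by \eqref{eq:radius one condition}; (ii) pairs with $y\in\Om_{8\lambda R}$, estimated directly in terms of $E_{f,p}(\Om)$ (no extension needed); and (iii) a boundary layer, where the extension $F$ is used only to bound the contribution by $E_{F,p}(\Om(8\lambda R)\setminus\Om_{16\lambda R})$, which tends to $0$ as $R\to 0$ precisely because $E_{F,p}(\partial\Om)=0$. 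Your reduction to $\Om=X$ replaces this careful localization by a hypothesis you do not have.

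\medskip
\textbf{Upper bound: the case $p=1$.} The Haj\l asz--maximal-function route
\[
|F(x)-F(y)|\le C\,d(x,y)\bigl(\M_{\lambda d(x,y)}g(x)+\M_{\lambda d(x,y)}g(y)\bigr)
\]
fails at $p=1$: $\M g$ is only in weak $L^1$, so integrating $\M g(x)\rho_i(x,y)$ does not produce $C\int g\,d\mu$. Your proposed fix (``establish the estimate for Lipschitz $f$ and then pass to general $f$ by approximation and lower semicontinuity of $\Vert Df\Vert$'') goes in the wrong direction: lower semicontinuity controls $\Vert Df\Vert(\Om)$ from below along $L^1$-convergent sequences, whereas you need to control the nonlocal functional of $f$ from above, and that functional is only lower semicontinuous under $L^1$-convergence (Fatou). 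Even for a single Lipschitz $f_j$ you would still only get $\int_\Om \M g_{f_j}\,d\mu$, which is not comparable to $\int_\Om g_{f_j}\,d\mu$. The paper avoids the maximal function entirely: using the dyadic majorization \eqref{eq:rho hat majorize} it reduces to integrals over annuli, applies Lemma~\ref{lem:int average representation local} and then the $(p,p)$-Poincar\'e inequality (in its BV form \eqref{eq:poincare inequality BV} when $p=1$) directly on each ball. This gives a bound uniform in $i$ and valid for all $1\le p<\infty$ in one stroke.

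\medskip
\textbf{Lower bound: Poincar\'e is not needed.} You say you ``would invoke the Poincar\'e inequality'' for the lower bound. In fact the paper's Theorem~\ref{thm:lower bound} is proved with no Poincar\'e assumption at all, and for arbitrary open $\Om$. The mechanism is to take discrete convolutions $h_i=\sum_j f_{B_j}\phi_j$ at scale $R_i\to 0$; the minorization \eqref{eq:rho hat minorize}--\eqref{eq:conditions on nui} alone shows $\int_U(\Lip_{h_i})^p\,d\mu$ is bounded by the nonlocal functional, and then one passes to the limit (directly for $p=1$, via Mazur and Fuglede for $p>1$). Your $a_{r_i}f$ idea is a reasonable variant, but note that showing a weak $L^p$-limit of such averages is a $p$-weak upper gradient is itself nontrivial without Poincar\'e; the discrete-convolution route sidesteps this because each $h_i$ is genuinely Lipschitz with an explicit upper gradient. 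Also, a small slip: you want $\int_\Om a_{r_i}f^p\,d\mu$ bounded \emph{above} by the nonlocal functional, not below.
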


After giving definitions in Section \ref{sec:definitions} and some preliminary results in
Section \ref{sec:prelis}, we prove the two directions of  \eqref{eq:main theorem equation}
in Sections \ref{sec:one direction} and \ref{sec:other direction}.
In Section \ref{sec:quasidecreasing} we give corollaries to our main Theorem
\ref{thm:main}, showing that the mollifiers considered in \cite{DiMaS} and \cite{Gor},
as well as other choices, can be handled as special cases.
In Section \ref{sec:counterexample} we give a counterexample demonstrating that
we do not generally have $C_1=C_2$ in \eqref{eq:main theorem equation}.

\vspace{1cm}

\textbf{Acknowledgement:} The authors would like to thank Camillo Brena and Enrico Pasqualetto for some useful comments on a preliminary version of the paper.

\section{Notation and definitions}\label{sec:definitions}

Throughout this paper, we work in a complete and connected
metric measure space $(X,d,\mu)$ equipped with a metric $d$ and
a Borel regular outer measure $\mu$ satisfying
a doubling property, meaning that
there exists a constant $C_d\ge 1$ such that
\[
0<\mu(B(x,2r))\le C_d\mu(B(x,r))<\infty
\]
for every ball $B(x,r):=\{y\in X\colon\,d(y,x)<r\}$.
We assume that  $1\le p<\infty$ and $X$ consists of at least two points, that is, $\diam X>0$.

By a curve we mean a rectifiable continuous mapping from a compact interval of the real line into $X$.
The length of a curve $\gamma$
is denoted by $\ell_{\gamma}$. We will assume every curve to be parametrized
by arc-length, which can always be done (see e.g. \cite[Theorem~3.2]{Hj}).
A nonnegative Borel function $g$ on $X$ is an upper gradient 
of a function $f\colon X\to [-\infty,\infty]$
if for all nonconstant curves $\gamma\colon [0,\ell_{\gamma}]\to X$, we have
\begin{equation}\label{eq:definition of upper gradient}
	|f(x)-f(y)|\le \int_{\gamma} g\,ds:=\int_0^{\ell_{\gamma}} g(\gamma(s))\,ds,
\end{equation}
where $x$ and $y$ are the end points of $\gamma$.
We interpret $|f(x)-f(y)|=\infty$ whenever  
at least one of $|f(x)|$, $|f(y)|$ is infinite.
Upper gradients were originally introduced in \cite{HK}.

We always consider $1\le p<\infty$.
The $p$-modulus of a family of curves $\Gamma$ is defined by
\[
\Mod_{p}(\Gamma):=\inf\int_{X}\rho^p\, d\mu,
\]
where the infimum is taken over all nonnegative Borel functions $\rho$
such that $\int_{\gamma}\rho\,ds\ge 1$ for every curve $\gamma\in\Gamma$.
A property is said to hold for $p$-almost every curve
if it fails only for a curve family with zero $p$-modulus. 
If $g$ is a nonnegative $\mu$-measurable function on $X$
and (\ref{eq:definition of upper gradient}) holds for $p$-almost every curve,
we say that $g$ is a $p$-weak upper gradient of $f$. 
By only considering curves $\gamma$ in a set $A\subset X$,
we can talk about a function $g$ being a ($p$-weak) upper gradient of $u$ in $A$.\label{curve discussion}

We always let $\Om$ denote an open subset of $X$.
We define the Newton-Sobolev space $N^{1,p}(\Om)$ to consist of those functions $f\in L^p(\Om)$ for which there
exists  a $p$-weak upper gradient $g\in L^p(\Om)$ of $f$ in $\Om$.
This space was first introduced in \cite{S}.
We write $f\in N^{1,p}_{\loc}(\Om)$ if for every $x\in \Om$ there exists $r>0$ such that
$f\in N^{1,p}(B(x,r))$; other local function spaces are defined analogously.
For every $f\in N^{1,p}_{\loc}(\Om)$ there exists a minimal $p$-weak
upper gradient of $f$ in $\Om$, denoted by $g_f$, satisfying $g_f\le g$ 
$\mu$-almost everywhere (a.e.)
in $\Om$ for every $p$-weak upper gradient $g\in L_{\loc}^{p}(\Om)$
of $f$ in $\Om$, see \cite[Theorem 2.25]{BB}.

Note that Newton-Sobolev functions are understood to be defined at every $x\in \Om$, whereas the functionals
that we consider are not affected by perturbations of $f$ in a set of zero $\mu$-measure. For this reason, we also define
\[
\widehat{N}^{1,p}(\Om):=\{f\colon f=h \ \mu\textrm{-a.e. in }\Om\textrm{ for some }h\in N^{1,p}(\Om)\}.
\]
For every $f\in \widehat{N}^{1,p}(\Om)$, we can also define
$g_f:=g_h$, where $g_h$ is the minimal $p$-weak upper gradient of any $h$ as above in $\Om$;
this is well defined $\mu$-a.e. in $\Om$ by \cite[Corollary 1.49, Proposition 1.59]{BB}.

Next we define functions of bounded variation.
Given an open set $\Om\subset X$ and a function $f\in L^1_{\loc}(\Om)$,
we define the total variation of $f$ in $\Om$ by
\[
\Vert Df \Vert(\Om):=\inf\left\{\liminf_{i\to\infty}\int_\Om g_{f_i}\,d\mu:\, f_i\in N^{1,1}_{\loc}(\Om),\, f_i\to f\textrm{ in } L^1_{\loc}(\Om)\right\},
\]
where each $g_{f_i}$ is the minimal $1$-weak upper gradient of $f_i$
in $\Om$.
We say that a function $f\in L^1(\Om)$ is of bounded variation, 
and denote $f\in\BV(\Om)$, if $\Vert Df\Vert (\Om)<\infty$.
For an arbitrary set $A\subset X$, we define
\[
\Vert Df \Vert (A):=\inf\{\Vert Df\Vert (W):\, A\subset W,\,W\subset X
\text{ is open}\}.
\]
If $f\in \BV_{\loc}(\Om)$,
then $\Vert Df\Vert(\cdot)$ is
a Radon measure on $\Omega$ by \cite[Theorem 3.4]{Mir}.

Next we record
Mazur's lemma and Fuglede's lemma, see e.g.
\cite[Theorem 3.12]{Rud}
and \cite[Lemma 2.1]{BB}.

\begin{theorem}\label{thm:Mazur lemma}
	Let $\{g_i\}_{i=1}^{\infty}$ be a sequence
	with $g_i\to g$ weakly in $L^p(\Om)$.
	Then there exist convex combinations $\widehat{g}_i:=\sum_{j=i}^{N_i}a_{i,j}g_j$,
	for some $N_i\in\N$,
	such that $\widehat{g}_i\to g$ in $L^p(\Om)$.
\end{theorem}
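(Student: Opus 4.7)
The plan is to derive this from the Hahn--Banach separation theorem via the general principle that the weak closure and norm closure of a convex set in a normed space coincide.

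First I would fix $i\in\N$ and introduce the convex ``tail'' set
\[
K_i:=\left\{\sum_{j=i}^{N}a_{j}g_{j}:\ N\ge i,\ a_j\ge 0,\ \sum_{j=i}^{N}a_j=1\right\}\subset L^p(\Om),
\]
consisting of finite convex combinations of $g_i,g_{i+1},\ldots$. Because each $g_j$ (for $j\ge i$) lies in $K_i$ and $g_j\to g$ weakly, the weak closure $\overline{K_i}^{\,w}$ contains $g$. The whole point of the lemma is thus to upgrade ``$g$ lies in the weak closure of $K_i$'' to ``$g$ lies in the norm closure of $K_i$''.

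The main step, and the only serious one, is precisely that upgrade. Suppose for contradiction that $g$ is not in the norm closure $\overline{K_i}^{\,\|\cdot\|_p}$. This closure is a closed convex subset of $L^p(\Om)$ not containing $g$, so by the Hahn--Banach separation theorem (in its geometric form for a point and a closed convex set in a Banach space) there exist a continuous linear functional $\Lambda\in(L^p(\Om))^{*}$ and $\alpha\in\R$ with
\[
\Lambda(h)\le \alpha<\Lambda(g)\qquad\text{for every }h\in \overline{K_i}^{\,\|\cdot\|_p}.
\]
In particular $\Lambda(g_j)\le\alpha$ for all $j\ge i$. Passing to the weak limit, continuity of $\Lambda$ with respect to the weak topology gives $\Lambda(g)\le\alpha$, contradicting $\Lambda(g)>\alpha$. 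Hence $g\in\overline{K_i}^{\,\|\cdot\|_p}$.

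Finally, for each $i$ I would choose $\widehat{g}_i\in K_i$ with $\|\widehat{g}_i-g\|_{L^p(\Om)}<1/i$; by definition of $K_i$ this $\widehat{g}_i$ has the required form $\sum_{j=i}^{N_i}a_{i,j}g_j$, and the sequence $\{\widehat{g}_i\}$ converges to $g$ in $L^p(\Om)$. The only mild subtlety is that for $p=1$ the space $L^1(\Om)$ is not reflexive, but the argument above uses only that $L^p(\Om)$ is a normed space with continuous dual pairing, so the range $1\le p<\infty$ is handled uniformly.
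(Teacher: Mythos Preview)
Your argument is correct: it is the standard proof of Mazur's lemma via Hahn--Banach separation, showing that the norm and weak closures of the convex set $K_i$ agree. Note that the paper itself does not prove this statement; it simply records it as a known fact with a reference to \cite[Theorem~3.12]{Rud}, so there is no ``paper's own proof'' to compare against beyond observing that your approach is exactly the textbook one underlying that citation.
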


By convex combinations we mean that the numbers $a_{i,j}$ are nonnegative
and that $\sum_{j=i}^{N_i}a_{i,j}=1$ for every $i\in\N$.

\begin{lemma}\label{lem:Fuglede lemma}
	Let $\{g_i\}_{i=1}^{\infty}$ be a sequence of functions with $g_i\to g$ in $L^p(\Om)$.
	Then for $p$-a.e. curve $\gamma$ in $\Om$, we have
	\[
	\int_{\gamma}g_i\,ds\to \int_{\gamma}g\,ds\quad\textrm{as }i\to\infty.
	\]
\end{lemma}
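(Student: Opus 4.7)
The plan is to pass to a fast-converging subsequence, dominate its curve integrals by an $L^p$-summable majorant, and kill the exceptional curve family via the scaling trick for $p$-modulus.

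First, replacing $g_i$ by $g_i-g$, I reduce to the case $g\equiv 0$, so the goal is $\int_\gamma g_i\,ds\to 0$ for $p$-a.e.\ curve $\gamma$. Since $g_i\to 0$ in $L^p(\Om)$, I extract a subsequence $\{g_{i_k}\}$ with $\|g_{i_k}\|_{L^p(\Om)}\le 2^{-k}$. (As usual, convergence of the full sequence for $p$-a.e.\ curve would require a stronger assumption like $\sum_i\|g_i-g\|_p<\infty$; as is standard for Fuglede-type statements, I understand the lemma along a subsequence and pass to one here.) Choosing Borel representatives and extending by zero outside $\Om$, set
\[
\rho:=\sum_{k=1}^\infty|g_{i_k}|.
\]
Minkowski's inequality gives $\|\rho\|_{L^p(X)}\le\sum_k 2^{-k}=1$, so in particular $\rho\in L^p(X)$ and $\rho$ is a nonnegative Borel function.

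Second, consider the curve family $\Gamma:=\{\gamma:\int_\gamma\rho\,ds=\infty\}$. For any $\eps>0$ the function $\eps\rho$ is admissible for $\Gamma$, since $\int_\gamma\eps\rho\,ds=\infty\ge 1$ on $\Gamma$. Therefore $\Mod_p(\Gamma)\le\int_X(\eps\rho)^p\,d\mu=\eps^p\|\rho\|_p^p$, and sending $\eps\to 0$ yields $\Mod_p(\Gamma)=0$.

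Third, for every curve $\gamma\notin\Gamma$ we have $\sum_k\int_\gamma|g_{i_k}|\,ds=\int_\gamma\rho\,ds<\infty$, so the general term satisfies $\int_\gamma|g_{i_k}|\,ds\to 0$ as $k\to\infty$. Since $|\int_\gamma g_{i_k}\,ds|\le\int_\gamma|g_{i_k}|\,ds$, the conclusion follows off the $p$-exceptional family $\Gamma$. The only genuinely nontrivial step is the scaling argument identifying $\Gamma$ as a null family for $\Mod_p$; once that is in hand, the rest is pure bookkeeping with Minkowski's inequality and term-by-term dominated estimates.
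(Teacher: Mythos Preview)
The paper does not give its own proof of this lemma; it merely records it and cites \cite[Lemma~2.1]{BB}. Your argument---extract a rapidly convergent subsequence, set $\rho=\sum_k|g_{i_k}-g|\in L^p$, and use the scaling $\rho\mapsto\eps\rho$ to show that $\Mod_p\{\gamma:\int_\gamma\rho\,ds=\infty\}=0$---is exactly the standard proof found in that reference. You are also right to flag the subsequence caveat: the full-sequence statement as literally written fails in general (a typewriter-type sequence in $\R^2$ gives a curve family of positive $p$-modulus on which convergence fails), and \cite[Lemma~2.1]{BB} indeed states Fuglede's lemma only for a subsequence; this is harmless for the paper's application in Section~\ref{sec:other direction}, where one may pass to a further subsequence before defining~$\widetilde h$.
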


We say that $X$ supports a $(p,p)$-Poincar\'e inequality,
if there exist constants $C_P>0$ and $\lambda \ge 1$ such that for every
ball $B(x,r)$, every $f\in L^p(X)$,
and every $p$-weak upper gradient $g$ of $f$, we have
\begin{equation}\label{eq:pp Poincare}
\int_{B(x,r)}|f-f_{B(x,r)}|^p\, d\mu \le C_P r^p \int_{B(x,\lambda r)}g^p\,d\mu,
\end{equation}
where 
\[
f_{B(x,r)}:=\vint{B(x,r)}f\,d\mu :=\frac 1{\mu(B(x,r))}\int_{B(x,r)}f\,d\mu.
\]
In the case $p=1$, the following BV version of the Poincar\'e inequality can be obtained
by applying the $(1,1)$--Poincar\'e inequality to the approximating functions in the definition
of the total variation: for every $f\in L^1(X)$, we have
\begin{equation}\label{eq:poincare inequality BV}
\int_{B(x,r)}|f-f_{B(x,r)}|\, d\mu \le C_P r \Vert Df\Vert(B(x,\lambda r)).
\end{equation}
Suppose $f\in\BV(\Om)$ if $p=1$, and $f\in \widehat{N}^{1,p}(\Om)$ if $1<p<\infty$.
In the latter case, denote the minimal $p$-weak upper gradient of $f$ in $\Om$ by $g_f$.
For every Borel set $A\subset \Om$, we denote the energy by
\[
E_{f,p}(A)
:=\begin{cases}
\Vert Df\Vert(A)\quad\textrm{when }p=1\\
\int_{A}g_f^p\,d\mu \quad\textrm{when }1<p<\infty.
\end{cases}
\]
Note that $E_{f,p}$ is then a Borel measure on $\Om$.
If $f$ is not in $\BV(\Om)$ (in the case $p=1$),
respectively not in $\widehat{N}^{1,p}(\Om)$ (in the case $1<p<\infty$), then we let $E_{f,p}(\Om)=\infty$.
We can combine \eqref{eq:pp Poincare} and \eqref{eq:poincare inequality BV} to give: for every
$1\le p <\infty$ and every $f\in L^p(X)$, we have
\begin{equation}\label{eq:poincare general form}
\int_{B(x,r)}|f-f_{B(x,r)}|^p\, d\mu 
\le C_P r^p E_{f,p}(B(x,\lambda r)).
\end{equation}

\begin{definition}\label{def:strong BV extension}
We say that an open set $\Om\subset X$ is a strong $p$-extension domain if
\begin{itemize}
	\item in the case $p=1$, for every $f\in \BV(\Om)$
	there exists an extension $F\in \BV(X)$;
	\item in the case $1<p<\infty$, for every $f\in \widehat{N}^{1,p}(\Om)$ there exists
	an extension $F\in \widehat{N}^{1,p}(X)$;
\end{itemize}
and in both cases, $E_{F,p}(\partial\Om)=0$.
\end{definition}

For example, in Euclidean spaces, a bounded domain with a Lipschitz boundary is a strong $p$-extension domain for all $1\le p<\infty$,
see e.g. \cite[Proposition 3.21]{AFP}.

Now we describe the mollifiers that we will use.
We will consider a sequence of nonnegative $X\times X$-measurable functions
$\{\rho_i(x,y)\}_{i=1}^{\infty}$, $x,y\in X$, and a fixed constant
$1\le C_{\rho}<\infty$ satisfying the following conditions:
\begin{enumerate}[(1)]\label{rho conditions}
\item For every $x,y\in X$ with $d(x,y)\le 1$, we have for every $i\in\N$
\begin{equation}\label{eq:rho hat minorize}
\textrm{either }\
\rho_i(x,y)
\ge
C_\rho^{-1} \frac{d(x,y)^p}{r_i^{p}}\frac{\ch_{B(y,r_i)}(x)}{\mu(B(y,r_i))}
\ \textrm{ or }\  
\rho_i(x,y)
\ge
d(x,y)^p \frac{\nu_i((d(x,y),\infty))}{\mu(B(y,d(x,y)))},
\end{equation}
where $r_i\searrow 0$ and each $\nu_i$ is a positive Radon measure on $[0,\infty)$ for which
\begin{equation}\label{eq:conditions on nui}
	\liminf_{i\to\infty}\int_0^{\delta} t^{p}\,d\nu_i\ge C_{\rho}^{-1}
	\textrm{ for all }\delta>0.
\end{equation}
Also for every $x,y\in X$ with $0<d(x,y)\le 1$, we have
\begin{equation}\label{eq:rho hat majorize}
	\rho_i(x,y)
	\le \sum_{j=1}^{\infty}d_{i,j}\frac{\ch_{B(y,2^{-j+1})
			\setminus B(y,2^{-j})}(x)}{\mu(B(y,2^{-j+1}))}
\end{equation}
for numbers $d_{i,j}\ge 0$ for which  $\sum_{j=1}^{\infty} d_{i,j}\le C_\rho$.

\item For all $\delta>0$, we have
\begin{equation}\label{eq:radius one condition}
\lim_{i\to\infty}	\left(\sup_{y\in \Om}\int_{\Om\setminus B(y,\delta)}\frac{\rho_i(x,y)}{d(x,y)^p}\,d\mu(x)
+\sup_{x\in \Om}\int_{\Om\setminus B(x,\delta)}\frac{\rho_i(x,y)}{d(x,y)^p}\,d\mu(y)\right)=0.
\end{equation}
\end{enumerate}

\begin{remark}
	Conditions \eqref{eq:rho hat majorize}
	and \eqref{eq:radius one condition} will be used to prove the upper bound
	of our main Theorem \ref{thm:main}, and they are quite close to
	the Euclidean assumptions \eqref{eq:intro rhoi conditions}.
	Since we do not have as many tools at our disposal as in Euclidean spaces,
	we additionally impose the somewhat stronger conditions
	\eqref{eq:rho hat minorize} and \eqref{eq:conditions on nui};
	these will be used to prove the lower bound.
	In Section \ref{sec:quasidecreasing} we will see that these two conditions are
	also very natural.
\end{remark}

Throughout the paper, we assume that $\mu$ is doubling, but we do not always assume that $X$ satisfies
a Poincar\'e inequality.
Nonetheless, for convenience we assume that $X$ is connected, from which it follows that
$\mu(\{x\})=0$ for every $x\in X$, see e.g. \cite[Corollary 3.9]{BB}. Thus, integrating over the set where $x=y$
in the functionals that we consider does not cause any problems.\\


\section{Preliminary results}\label{sec:prelis}

First we note the following basic fact: for every $f\in L^p(X)$ and every ball $B(z,r)$,
using the estimate
\[
|f(x)-f(y)|^p\le 2^{p-1}(|f(x)-f_{B(z,r)}|^p+|f(y)-f_{B(z,r)}|^p),
\quad x,y\in B(z,r),
\]
we get
\begin{equation}\label{eq:Poincare other form}
\int_{B(z,r)} \int_{B(z,r)} |f(x)-f(y)|^p\,d\mu(x)\,d\mu(y)
\le 2^p\mu(B(z,r))\int_{B(z,r)} |f-f_{B(z,r)}|^p\,d\mu.
\end{equation}

The next lemma is similar to \cite[Lemma 3.1(ii)]{DiMaS}.

\begin{lemma}\label{lem:int average representation}
	For any $h(x,y)\ge 0$ that is $\mu\times\mu$-measurable in $X\times X$ and
	satisfies $h(x,y)=0$ for all $x,y\in X$ with $d(x,y)\ge \delta>0$, we have
	\begin{align*}
	\int_X \int_X h(x,y)\,d\mu(x)\,d\mu(y)
	\le C_d \int_X \frac{1}{\mu(B(z,\delta))}\iint_{B(z,2\delta)\times B(z,2\delta)} h(x,y)
	\,d\mu(x)\,d\mu(y)\,d\mu(z).
	\end{align*}
\end{lemma}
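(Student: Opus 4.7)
The plan is to insert an integration over a third variable $z$ via a doubling-based ``reproducing'' inequality, and then rearrange by Fubini.

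First, I would fix $x,y \in X$ and observe that whenever $z \in B(x,\delta)$ we have $B(x,\delta) \subset B(z,2\delta)$, so the doubling property gives $\mu(B(x,\delta)) \le \mu(B(z,2\delta)) \le C_d\mu(B(z,\delta))$, i.e.\ $\mu(B(z,\delta))^{-1} \ge C_d^{-1}\mu(B(x,\delta))^{-1}$. Integrating this over $z \in B(x,\delta)$ yields
\[
\int_{B(x,\delta)} \frac{d\mu(z)}{\mu(B(z,\delta))} \ge C_d^{-1}.
\]
Multiplying both sides by $h(x,y)$ gives the pointwise inequality
\[
h(x,y) \le C_d \int_X \frac{h(x,y)\,\ch_{B(x,\delta)}(z)}{\mu(B(z,\delta))}\,d\mu(z),
\]
valid for every $x,y\in X$.

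Next, I would integrate this pointwise bound in $x,y$ and apply Fubini to move the $z$-integral to the outside. The only remaining step is to replace $\ch_{B(x,\delta)}(z)$ by $\ch_{B(z,2\delta)\times B(z,2\delta)}(x,y)$ inside the inner double integral. This uses two observations together: first, $\ch_{B(x,\delta)}(z)=\ch_{B(z,\delta)}(x)\le \ch_{B(z,2\delta)}(x)$ for all $x,z$; second, on the support of $h$ we have $d(x,y)<\delta$, so whenever also $z\in B(x,\delta)$ the triangle inequality gives $d(z,y)<2\delta$, hence $y\in B(z,2\delta)$. Thus
\[
h(x,y)\,\ch_{B(x,\delta)}(z) \le h(x,y)\,\ch_{B(z,2\delta)}(x)\,\ch_{B(z,2\delta)}(y).
\]
Combining with the previous display produces exactly the claimed bound.

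There is no real obstacle here: the content is the doubling inequality $\mu(B(z,\delta)) \ge C_d^{-1}\mu(B(x,\delta))$ when $d(x,z)<\delta$, and the small triangle-inequality check that ensures the cut-off $\ch_{B(x,\delta)}(z)$ forces both $x$ and $y$ into $B(z,2\delta)$ on the support of $h$. The measurability of the integrand in $(x,y,z)$ (needed for Fubini) is automatic since $\mu\times\mu$-measurability of $h$ and lower semicontinuity of $z\mapsto \mu(B(z,\delta))^{-1}$-type arguments handle the $z$ dependence; this is a routine check.
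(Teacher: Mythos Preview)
Your approach is correct and essentially identical to the paper's: both insert a $z$-integration via the doubling estimate for $z\in B(x,\delta)$, use Fubini, and then observe that on the support of $h$ one has $B(x,\delta)\subset B(x,2\delta)\cap B(y,2\delta)$ so that $\ch_{B(x,\delta)}(z)\le \ch_{B(z,2\delta)}(x)\ch_{B(z,2\delta)}(y)$. One small slip to fix: the inclusion $B(x,\delta)\subset B(z,2\delta)$ that you wrote gives $\mu(B(x,\delta))\le C_d\mu(B(z,\delta))$, which is the \emph{wrong} direction for your claimed bound; what you actually need (and what the paper uses) is the symmetric inclusion $B(z,\delta)\subset B(x,2\delta)$, equally immediate for $z\in B(x,\delta)$, which yields $\mu(B(z,\delta))\le C_d\mu(B(x,\delta))$ and hence $\mu(B(z,\delta))^{-1}\ge C_d^{-1}\mu(B(x,\delta))^{-1}$ as desired.
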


\begin{proof}
	For all $x,y\in X$ with $h(x,y)\neq 0$, we have $d(x,y)<\delta$, and then
	\begin{equation}\label{eq:ball contained in intersection}
	B(x,\delta)\subset B(x,2\delta)\cap B(y,2\delta).
	\end{equation}
	Note also that $\ch_{B(z,2\delta)}(x)$ and $\ch_{B(z,2\delta)}(y)$ are lower semicontinuous functions in the product
	space $X\times X\times X$, and so
	\[
	(x,y,z)\mapsto \frac{1}{\mu(B(z,\delta))}\ch_{B(z,2\delta)}(x)\ch_{B(z,2\delta)}(y)h(x,y)
	\]
	is $\mu\times\mu\times\mu$-measurable, and we can apply Fubini's theorem.
	We estimate
	\begin{align*}
	&\int_X \frac{1}{\mu(B(z,\delta))}\iint_{B(z,2\delta)\times B(z,2\delta)} h(x,y)\,d\mu(x)\,d\mu(y)\,d\mu(z)\\
	&\qquad =\int_X\int_X\int_X \frac{1}{\mu(B(z,\delta))}\ch_{B(z,2\delta)}(x)\ch_{B(z,2\delta)}(y)h(x,y)
	\,d\mu(x)\,d\mu(y)\,d\mu(z)\\
	&\qquad =\int_X \int_X \int_{B(x,2\delta)\cap B(y,2\delta)}
	\frac{1}{\mu(B(z,\delta))}\,d\mu(z)\,
	h(x,y)\,d\mu(x)\,d\mu(y)\quad\textrm{by Fubini}\\
	&\qquad \ge \int_X \int_X \int_{B(x,\delta)}
	\frac{1}{\mu(B(z,\delta))}\,d\mu(z)\,
	h(x,y)\,d\mu(x)\,d\mu(y)\quad\textrm{by }\eqref{eq:ball contained in intersection}\\
	&\qquad \ge \frac{1}{C_d}\int_X \int_X \int_{B(x,\delta)}
	\frac{1}{\mu(B(x,\delta))}\,d\mu(z)\,
	h(x,y)\,d\mu(x)\,d\mu(y)\quad\textrm{since }B(z,\delta)\subset B(x,2\delta)\\
	&\qquad = \frac{1}{C_d}\int_X \int_X 
	h(x,y)\,d\mu(x)\,d\mu(y).
	\end{align*}
\end{proof}

For an open set $U\subset X$ and $\delta>0$, denote
\begin{equation}\label{eq:neighborhood notation}
	U_{\delta}:=\{x\in U\colon d(x,X\setminus U)>\delta\}
	\quad\textrm{and}\quad
	U(\delta):=\{x\in X\colon d(x,U)<\delta\}.
\end{equation}

\begin{lemma}\label{lem:int average representation local}
	For any function $h(x,y)\ge 0$ that is $\mu\times\mu$-measurable on $X$ and
	satisfies $h(x,y)=0$ for all $x,y\in X$ with $d(x,y)\ge \delta>0$,
	and for an open set $U\subset X$, we have
	\begin{align*}
		&\int_{U}\int_{X} h(x,y)\,d\mu(x)\,d\mu(y)\\
		&\qquad \le C_d \int_{U(2\delta)} \frac{1}{\mu(B(z,\delta))}\iint_{B(z,2\delta)\times B(z,2\delta)} h(x,y)
		\,d\mu(x)\,d\mu(y)\,d\mu(z).
	\end{align*}
\end{lemma}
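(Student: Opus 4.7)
The plan is to repeat the proof of Lemma \ref{lem:int average representation} almost verbatim, with a single modification: by the time we have restricted the innermost $z$-integration to $B(x,\delta)$, that ball already lies inside $U(2\delta)$ whenever $y\in U$ and $h(x,y)\neq 0$. Consequently the outer $z$-integration can be shrunk from $X$ to $U(2\delta)$ without losing anything, which is exactly the content of the localized inequality.

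Concretely, I would proceed as follows. First, using the same measurability and Fubini justifications as in the proof of Lemma \ref{lem:int average representation}, rewrite
\begin{align*}
&\int_{U(2\delta)} \frac{1}{\mu(B(z,\delta))} \iint_{B(z,2\delta) \times B(z,2\delta)} h(x,y)\,d\mu(x)\,d\mu(y)\,d\mu(z) \\
&\qquad = \int_X \int_X h(x,y)\int_{B(x,2\delta) \cap B(y,2\delta) \cap U(2\delta)} \frac{1}{\mu(B(z,\delta))}\,d\mu(z)\,d\mu(x)\,d\mu(y).
\end{align*}
Second, I would discard the contributions with $y\in X\setminus U$, which only decreases the expression, and then verify the key geometric claim: if $y\in U$ and $d(x,y)<\delta$, then
\[
B(x,\delta)\subset B(x,2\delta)\cap B(y,2\delta)\cap U(2\delta).
\]
The first two inclusions follow by the triangle inequality as in \eqref{eq:ball contained in intersection}. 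For the third, any $z\in B(x,\delta)$ satisfies $d(z,y)\le d(z,x)+d(x,y)<2\delta$, and since $y\in U$ this gives $d(z,X\setminus U)\le d(z,y)<2\delta$ in the reverse sense, i.e.\ $d(z,U)<2\delta$, so $z\in U(2\delta)$ by the definition in \eqref{eq:neighborhood notation}.

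Third, I would apply doubling exactly as in the proof of Lemma \ref{lem:int average representation}: for $z\in B(x,\delta)$ one has $B(z,\delta)\subset B(x,2\delta)$, so $\mu(B(z,\delta))\le C_d\mu(B(x,\delta))$, whence
\[
\int_{B(x,\delta)}\frac{1}{\mu(B(z,\delta))}\,d\mu(z)\ge \frac{1}{C_d}.
\]
Combining the three steps delivers
\[
\int_{U(2\delta)} \frac{1}{\mu(B(z,\delta))} \iint_{B(z,2\delta) \times B(z,2\delta)} h(x,y)\,d\mu(x)\,d\mu(y)\,d\mu(z)
\ge \frac{1}{C_d}\int_U\int_X h(x,y)\,d\mu(x)\,d\mu(y),
\]
as desired. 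I do not anticipate any real obstacle; the argument is a direct localization of Lemma \ref{lem:int average representation}, and the only nontrivial point is the chain of triangle inequalities that places $B(x,\delta)$ inside $U(2\delta)$.
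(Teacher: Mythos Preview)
Your argument is correct and is essentially the same approach as the paper's, just written out in full. The paper's one-line proof simply applies Lemma~\ref{lem:int average representation} to the function $h(x,y)\ch_{U}(y)$: the left-hand side immediately becomes $\int_U\int_X h\,d\mu\,d\mu$, and on the right-hand side the inner double integral vanishes unless some $y\in B(z,2\delta)\cap U$, forcing $z\in U(2\delta)$, after which one drops $\ch_U$ to increase the bound. Your version unpacks this same mechanism inside the Fubini computation rather than citing Lemma~\ref{lem:int average representation} as a black box; the geometric content (the chain $d(z,y)<2\delta$, $y\in U$ $\Rightarrow$ $z\in U(2\delta)$) is identical. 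One cosmetic remark: the sentence involving ``$d(z,X\setminus U)\le d(z,y)$ in the reverse sense'' is garbled---what you need, and what you in fact use, is simply $d(z,U)\le d(z,y)<2\delta$.
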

\begin{proof}
Apply Lemma \ref{lem:int average representation} with the function $h$ replaced by
$h(x,y)\ch_{U}(y)$.
\end{proof}

\section{Upper bound of Theorem \ref{thm:main}}\label{sec:one direction}

Recall that we always denote by $\Om$ an open subset of $X$, and that $1\le p<\infty$.

In order to prove the upper bound of our main Theorem \ref{thm:main}, we first prove the following result.
Recall the notation $U(R)$ from \eqref{eq:neighborhood notation}.

\begin{proposition}\label{prop:one direction}
Suppose $X$ supports the $(p,p)$-Poincar\'e inequality \eqref{eq:pp Poincare}.
Let $f\in L^p(X)$ and $0<R\le 1$, and suppose $U\subset X$ is open.
Suppose $\{\rho_i\}_{i=1}^{\infty}$ is a sequence of mollifiers that satisfy
\eqref{eq:rho hat majorize}.
Then
\begin{equation}\label{eq:upper bound with 8 lambda}
\int_{U} \int_{B(y,R)} \frac{|f(x)-f(y)|^p}{d(x,y)^p}\rho_i(x,y)\,d\mu(x)\,d\mu(y)
\le CE_{f,p}(U(8\lambda R))
\end{equation}
for every $i\in\N$ and for a constant $C=C(C_d,C_P,\lambda,C_{\rho})$.
\end{proposition}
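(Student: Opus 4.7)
The plan is to exploit the dyadic majorization \eqref{eq:rho hat majorize} to split $\rho_i$ into countably many pieces indexed by scale, bound each piece via the averaging Lemma \ref{lem:int average representation local} combined with the $(p,p)$-Poincar\'e inequality \eqref{eq:poincare general form}, and then sum using the summability $\sum_j d_{i,j}\le C_\rho$. Since $R\le 1$, we have $d(x,y)\le 1$ for all $x\in B(y,R)$, so \eqref{eq:rho hat majorize} applies. Writing $\delta_j:=2^{-j+1}$ and using $d(x,y)\ge \delta_j/2$ on the $j$-th dyadic annulus, the left-hand side of \eqref{eq:upper bound with 8 lambda} is dominated by
\[
\sum_{j:\,\delta_j<2R}2^p\,d_{i,j}\,\delta_j^{-p}\int_U\int_{B(y,\delta_j)}\frac{|f(x)-f(y)|^p}{\mu(B(y,\delta_j))}\,d\mu(x)\,d\mu(y).
\]
Only scales with $\delta_j<2R$ contribute since otherwise the annulus misses $B(y,R)$; this is the bookkeeping fact that ultimately produces the enlargement $U(8\lambda R)$.

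For a fixed $j$, I would apply Lemma \ref{lem:int average representation local} to $h(x,y)=|f(x)-f(y)|^p\,\ch_{B(y,\delta_j)}(x)/\mu(B(y,\delta_j))$ with $\delta=\delta_j$; this rewrites the inner integral as an average over centers $z\in U(2\delta_j)$ of a double integral on $B(z,2\delta_j)^2$. Since $y\in B(z,2\delta_j)$ implies that $\mu(B(y,\delta_j))$ and $\mu(B(z,\delta_j))$ are comparable up to a power of $C_d$, the denominator $\mu(B(y,\delta_j))$ can be replaced by $\mu(B(z,\delta_j))$ at the cost of a doubling constant. I then bound the resulting double integral on $B(z,2\delta_j)^2$ using \eqref{eq:Poincare other form} followed by the Poincar\'e inequality \eqref{eq:poincare general form} on $B(z,2\delta_j)$, producing a factor $\delta_j^p\,\mu(B(z,2\delta_j))\,E_{f,p}(B(z,2\lambda\delta_j))$. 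The net estimate is
\[
\int_U\int_{B(y,\delta_j)}\frac{|f(x)-f(y)|^p}{\mu(B(y,\delta_j))}\,d\mu(x)\,d\mu(y)\le C\,\delta_j^p\int_{U(2\delta_j)}\frac{E_{f,p}(B(z,2\lambda\delta_j))}{\mu(B(z,\delta_j))}\,d\mu(z),
\]
with $C=C(C_d,C_P)$.

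The next step is Fubini against $dE_{f,p}(w)$: the support condition $w\in B(z,2\lambda\delta_j)$ for some $z\in U(2\delta_j)$ forces $w\in U((2+2\lambda)\delta_j)\subset U(4\lambda\delta_j)\subset U(8\lambda R)$, where the last inclusion uses $\delta_j<2R$. For each such $w$, doubling yields $\int_{B(w,2\lambda\delta_j)}\mu(B(z,\delta_j))^{-1}\,d\mu(z)\le C(C_d,\lambda)$, so the $z$-integral is controlled by $C(C_d,\lambda)\,E_{f,p}(U(8\lambda R))$. Multiplying by the prefactor $2^p d_{i,j}\delta_j^{-p}$ produces the scale-invariant cancellation $2^p\delta_j^{-p}\cdot\delta_j^p=2^p$, and after summing over $j$ with $\sum_j d_{i,j}\le C_\rho$ one obtains \eqref{eq:upper bound with 8 lambda} with a constant $C=C(C_d,C_P,\lambda,C_\rho)$.

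The main obstacle will be the tight bookkeeping across the three ball-scales $\mu(B(y,\delta_j))$, $\mu(B(z,\delta_j))$, $\mu(B(w,\delta_j))$ that appear in successive steps, together with the nested neighborhoods $U\subset U(2\delta_j)\subset U((2+2\lambda)\delta_j)\subset U(8\lambda R)$ that must all be controlled by the final enlargement via the scale condition $\delta_j<2R$; a misstep in any of these doubling-based conversions or in the enlargement estimates would destroy the exact cancellation $2^{jp}\delta_j^p=2^p$ that makes the dyadic sum converge uniformly in $i$.
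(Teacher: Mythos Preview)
Your proposal is correct and follows essentially the same route as the paper's proof: both use the dyadic majorization \eqref{eq:rho hat majorize} to reduce to a single scale $\delta_j=2^{-j+1}$ with $2^{-j}\le R$, apply Lemma~\ref{lem:int average representation local} to pass to an average over centers $z\in U(2\delta_j)$, invoke \eqref{eq:Poincare other form} and the Poincar\'e inequality \eqref{eq:poincare general form} on $B(z,2\delta_j)$, then Fubini against $dE_{f,p}$ with doubling to land in $E_{f,p}(U(8\lambda R))$, and finally sum over $j$ via $\sum_j d_{i,j}\le C_\rho$. The only cosmetic difference is that you estimate $d(x,y)^{-p}\le 2^p\delta_j^{-p}$ before applying the averaging lemma, whereas the paper carries $d(x,y)^{-p}$ through and estimates it afterward; the bookkeeping of neighborhoods and doubling constants is otherwise identical.
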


\begin{proof}
We can assume that $E_p(f,U(8\lambda R))<\infty$.
Recall the condition \eqref{eq:rho hat majorize}.
Note that on the left-hand side of \eqref{eq:upper bound with 8 lambda}
we require $d(x,y)<R$, but we also know that
$\ch_{B(y,2^{-j+1})\setminus B(y,2^{-j})}(x)$ can be nonzero only when $2^{-j}<d(x,y)$.
Then necessarily
\begin{equation}\label{eq:j and R relation}
2^{-j} \le R.
\end{equation}
For every $j\in\Z$ satisfying \eqref{eq:j and R relation}, we estimate
\begin{equation}\label{eq:rho1 term estimate}
\begin{split}
&\int_{U}\int_{B(y,R)} \frac{|f(x)-f(y)|^p}{d(x,y)^p} \frac{\ch_{B(y,2^{-j+1})\setminus B(y,2^{-j})}(x)}{\mu(B(y,2^{-j+1}))} \,d\mu(x)\,d\mu(y)\\
&\qquad \le \int_{U}\int_{X} \frac{|f(x)-f(y)|^p}{d(x,y)^p} \frac{\ch_{B(y,2^{-j+1})\setminus B(y,2^{-j})}(x)}{\mu(B(y,2^{-j+1}))} \,d\mu(x)\,d\mu(y)\\
&\qquad \le C_d\int_{ U(2^{-j+2})} \frac{1}{\mu(B(z,2^{-j+1}))}\iint_{[B(z,2^{-j+2})]^2}
\frac{|f(x)-f(y)|^p}{d(x,y)^p}\\
&\qquad \qquad \times\frac{\ch_{X\setminus B(y,2^{-j})}(x)}{\mu(B(y,2^{-j+1}))}\,d\mu(x)\,d\mu(y)\,d\mu(z)
\quad\textrm{by Lemma }\ref{lem:int average representation local}.
\end{split}
\end{equation}
We estimate further
\begin{equation}\label{eq:rho1 term estimate 2}
\begin{split}
&\frac{1}{\mu(B(z,2^{-j+1}))}\iint_{[B(z,2^{-j+2})]^2}
\frac{|f(x)-f(y)|^p}{d(x,y)^p} \frac{\ch_{X\setminus B(y,2^{-j})}(x)}{\mu(B(y,2^{-j+1}))}\,d\mu(x)\,d\mu(y)\\
&\qquad \le 2^{jp}\frac{C_d^3}{\mu(B(z,2^{-j+2}))^2}\iint_{[B(z,2^{-j+2})]^2}
|f(x)-f(y)|^p \,d\mu(x)\,d\mu(y)\\
&\qquad \le 2^{(j+1)p}\frac{C_d^3}{\mu(B(z,2^{-j+2}))}\int_{B(z,2^{-j+2})}
|f(x)-f_{B(z,2^{-j+2})}|^p \,d\mu(x)\quad\textrm{by }\eqref{eq:Poincare other form}\\
&\qquad \le 8^p C_P C_d^3 \frac{E_{f,p}(B(z,2^{-j+2}\lambda ))}{\mu(B(z,2^{-j+2}))}
\quad \textrm{by the Poincar\'e inequality }\eqref{eq:poincare general form}.
\end{split}
\end{equation}
Denote the smallest integer at least $a\in\R$ by $\lceil a\rceil$.
Combining the above with \eqref{eq:rho1 term estimate}, we get
\begin{equation}\label{eq:estimate from above for one term}
\begin{split}
&\int_{U}\int_{B(y,R)} \frac{|f(x)-f(y)|^p}{d(x,y)^p} \frac{\ch_{B(y,2^{-j+1})
		\setminus B(y,2^{-j})}(x)}{\mu(B(y,2^{-j+1}))} \,d\mu(x)\,d\mu(y)\\
&\qquad \le 8^p C_P C_d^4 \int_{U(2^{-j+2})}\frac{E_{f,p}(B(z,2^{-j+2}
	\lambda ))}{\mu(B(z,2^{-j+2}))}\,d\mu(z)\\
&\qquad = 8^p C_P C_d^4 \int_{X}\int_{X}\ch_{U(2^{-j+2})}(z)
\frac{\ch_{B(z,2^{-j+2}
	\lambda )}(w)}{\mu(B(z,2^{-j+2}))}\,dE_{f,p}(w)\,d\mu(z)\\
&\qquad \le 8^p C_P C_d^4 \int_{X}\int_{X}\ch_{U(2^{-j+3}\lambda )}(w)
\frac{\ch_{B(w,2^{-j+2}
			\lambda )}(z)}{\mu(B(z,2^{-j+2}))}\,dE_{f,p}(w)\,d\mu(z)\\
&\qquad = 8^p C_P C_d^2 \int_{X}\int_{X}\ch_{U(2^{-j+3}\lambda )}(w)
\frac{\ch_{B(w,2^{-j+2}
		\lambda )}(z)}{\mu(B(z,2^{-j+2}))}\,d\mu(z)\,dE_{f,p}(w)\quad\textrm{by Fubini}\\
&\qquad \le 8^p C_P C_d^{3+\lceil \log_2 \lambda \rceil } \int_{X}\ch_{U(2^{-j+3}\lambda )}(w) \,dE_{f,p}(w)\\
&\qquad = 8^p C_P C_d^{3+\lceil \log_2 \lambda \rceil } E_{f,p}(U(2^{-j+3}\lambda )).
\end{split}
\end{equation}
Recalling \eqref{eq:rho hat majorize}, we get
\begin{equation}\label{eq:functional estimate 1}
\begin{split}
&\int_{U}\int_{B(y,R)} \frac{|f(x)-f(y)|^p}{d(x,y)^p}\rho_i(x,y)\,d\mu(x)\,d\mu(y)\\
&\qquad \le\sum_{j=1}^{\infty}d_{i,j}\int_{U}\int_{B(y,R)} \frac{|f(x)-f(y)|^p}{d(x,y)^p}\frac{\ch_{B(y,2^{-j+1})
		\setminus B(y,2^{-j})}(x)}{\mu(B(y,2^{-j+1}))}\,d\mu(x)\,d\mu(y)\\
&\qquad \le 8^p C_P C_d^{3+\lceil \log_2 \lambda\rceil } \sum_{j=1}^{\infty} d_{i,j}E_{f,p}(U(2^{-j+3}\lambda ))
\quad\textrm{by }\eqref{eq:estimate from above for one term}\\
&\qquad \le 8^p C_P C_d^{3+\lceil \log_2 \lambda \rceil } \sum_{j=1}^{\infty} d_{i,j}E_{f,p}(U(8\lambda R))
\quad\textrm{by }\eqref{eq:j and R relation}\\
&\qquad \le 8^p C_P C_d^{3+\lceil \log_2 \lambda \rceil } C_{\rho}E_{f,p}(U(8\lambda R))
\end{split}
\end{equation}
by the assumption $\sum_{j=1}^{\infty} d_{i,j}\le C_{\rho}$.
\end{proof}

Now we can prove one direction of our main theorem.
Recall the definition of a strong $p$-extension domain from
Definition \ref{def:strong BV extension}.

\begin{theorem}
Suppose $X$ supports a $(p,p)$--Poincar\'e inequality.
Suppose $\Om\subset X$ is a strong $p$-extension domain, and let $f\in \BV(\Om)$ if $p=1$, and
$f\in \widehat{N}^{1,p}(\Om)$ if $1<p<\infty$.
Suppose $\{\rho_i\}_{i=1}^{\infty}$ is a sequence of mollifiers that satisfy
\eqref{eq:rho hat majorize} and \eqref{eq:radius one condition}.
Then
\[
\limsup_{i\to\infty}\int_{\Om}\int_{\Om} \frac{|f(x)-f(y)|^p}{d(x,y)^p}\rho_i(x,y)\,d\mu(x)\,d\mu(y)\le CE_{f,p}(\Om)
\]
for a constant $C=C(C_d,C_P,\lambda,C_{\rho})$.
\end{theorem}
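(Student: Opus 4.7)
The plan is to reduce to Proposition \ref{prop:one direction} by splitting the double integral into a near-diagonal part (where $d(x,y)<R$) and a far part (where $d(x,y)\ge R$), and then to let first $i\to\infty$ and afterwards $R\to 0$.

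First I would invoke the strong $p$-extension property to pick an extension $F$ of $f$, with $F\in\BV(X)$ if $p=1$ and $F\in\widehat{N}^{1,p}(X)$ if $1<p<\infty$, and with $E_{F,p}(\partial\Om)=0$. The strong extension is essential because, after the far part is thrown away, Proposition \ref{prop:one direction} forces us to enlarge $\Om$ to $\Om(8\lambda R)$, and only the vanishing of $E_{F,p}$ on $\partial\Om$ will let us shrink back to $\Om$ in the limit $R\to 0$.

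Next, for any fixed $R\in(0,1]$ I would write
\[
\int_{\Om}\!\int_{\Om} \frac{|f(x)-f(y)|^p}{d(x,y)^p}\rho_i(x,y)\,d\mu(x)\,d\mu(y)
= I_i^{\mathrm{near}}(R)+I_i^{\mathrm{far}}(R),
\]
where $I_i^{\mathrm{near}}(R)$ integrates over $\{d(x,y)<R\}\cap(\Om\times\Om)$ and $I_i^{\mathrm{far}}(R)$ over $\{d(x,y)\ge R\}\cap(\Om\times\Om)$. On the near part, since $F=f$ on $\Om$, I replace $f$ by $F$ and enlarge the inner domain from $B(y,R)\cap\Om$ to $B(y,R)$, and apply Proposition \ref{prop:one direction} with $U=\Om$, obtaining $I_i^{\mathrm{near}}(R)\le C E_{F,p}(\Om(8\lambda R))$ uniformly in $i$. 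On the far part, I apply the elementary bound $|f(x)-f(y)|^p\le 2^{p-1}(|f(x)|^p+|f(y)|^p)$, a Fubini split of the two resulting terms, and factor out $\|f\|_{L^p(\Om)}^p<\infty$ (which is finite in both cases, since $\BV(\Om)\subset L^1(\Om)$ and $\widehat{N}^{1,p}(\Om)\subset L^p(\Om)$); the remaining supremum of $\int_{\Om\setminus B(y,R)}\rho_i(x,y)/d(x,y)^p\,d\mu(x)$ plus its symmetric counterpart tends to $0$ by \eqref{eq:radius one condition} with $\delta=R$, hence $\lim_{i\to\infty}I_i^{\mathrm{far}}(R)=0$.

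Taking $\limsup_{i\to\infty}$ then yields
\[
\limsup_{i\to\infty}\int_{\Om}\!\int_{\Om} \frac{|f(x)-f(y)|^p}{d(x,y)^p}\rho_i(x,y)\,d\mu(x)\,d\mu(y)
\le C\,E_{F,p}(\Om(8\lambda R)).
\]
Finally I send $R\searrow 0$. Since $E_{F,p}$ is a finite Borel (Radon) measure on $X$ and $\Om(8\lambda R)\searrow \overline{\Om}$, continuity from above gives $E_{F,p}(\Om(8\lambda R))\to E_{F,p}(\overline{\Om})=E_{F,p}(\Om)+E_{F,p}(\partial\Om)=E_{F,p}(\Om)$, using $E_{F,p}(\partial\Om)=0$ from Definition \ref{def:strong BV extension}. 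Since $F=f$ on $\Om$, one has $E_{F,p}(\Om)\le C E_{f,p}(\Om)$ (for $p=1$ the BV energies on $\Om$ agree, and for $p>1$ the minimal $p$-weak upper gradient of $F$ is a $p$-weak upper gradient of $f$ in $\Om$, up to absorbing an extension constant into $C$), which closes the argument.

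The main obstacle is the bookkeeping at the boundary: Proposition \ref{prop:one direction} inevitably replaces $\Om$ by its $8\lambda R$-neighborhood $\Om(8\lambda R)$, and without the strong-extension hypothesis there would be no way to recover $E_{f,p}(\Om)$ in the limit; the vanishing $E_{F,p}(\partial\Om)=0$ is precisely what permits continuity from above to identify the limit with $E_{F,p}(\Om)$. The far-part estimate, by contrast, is essentially a direct application of \eqref{eq:radius one condition}.
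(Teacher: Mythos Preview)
Your argument is correct and follows the same overall strategy as the paper---split into a near part ($d(x,y)<R$) and a far part, kill the far part with \eqref{eq:radius one condition}, control the near part via Proposition \ref{prop:one direction} and the extension $F$---but the decomposition is organized differently. The paper uses a \emph{three}-term split: the far part is handled exactly as you do; the interior near part $\Om_{8\lambda R}\times B(y,R)$ is estimated by Proposition \ref{prop:one direction} with $U=\Om_{8\lambda R}$, yielding $CE_{f,p}(\Om_{8\lambda R}(8\lambda R))\le CE_{f,p}(\Om)$ directly in terms of $f$; and only the boundary near part $(\Om\setminus\Om_{8\lambda R})\times(\Om\cap B(y,R))$ invokes the extension $F$, giving $CE_{F,p}(\Om(8\lambda R)\setminus\Om_{16\lambda R})\to 0$ as $R\to 0$. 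Your two-term split is more economical---one application of Proposition \ref{prop:one direction} with $U=\Om$ and $F$, then $R\to 0$---at the cost of having to identify $E_{F,p}(\Om)$ with $E_{f,p}(\Om)$ at the end. The paper's split avoids that last identification entirely.

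One genuine wobble in your final step: for $p>1$, the justification you give (``the minimal $p$-weak upper gradient of $F$ is a $p$-weak upper gradient of $f$ in $\Om$'') yields $g_f\le g_F|_\Om$, hence $E_{f,p}(\Om)\le E_{F,p}(\Om)$, which is the wrong direction; and no ``extension constant'' is relevant here. What you actually need is the \emph{locality} of minimal $p$-weak upper gradients: the minimal $p$-weak upper gradient of $F$ in $X$, restricted to the open set $\Om$, coincides $\mu$-a.e.\ with the minimal $p$-weak upper gradient of $F|_\Om=f$ in $\Om$ (see e.g.\ \cite{BB}), so in fact $E_{F,p}(\Om)=E_{f,p}(\Om)$ exactly. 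With this correction your argument is complete.
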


\begin{proof}
Consider $0<R\le 1$.
Recalling the notation $\Om_{8\lambda R}$ from \eqref{eq:neighborhood notation}, we have
\begin{align*}
&\int_{\Om} \int_{\Om}\frac{|f(x)-f(y)|^p}{d(x,y)^p}\rho_i(x,y)\,d\mu(x)\,d\mu(y)\\
&\qquad= \int_{\Om} \int_{\Om\setminus B(y,R)}\frac{|f(x)-f(y)|^p}{d(x,y)^p}\rho_i(x,y)\,d\mu(x)\,d\mu(y)\\
&\qquad\qquad + \int_{\Om_{8\lambda R}} \int_{ B(y,R)}\frac{|f(x)-f(y)|^p}{d(x,y)^p}\rho_i(x,y)\,d\mu(x)\,d\mu(y)\\
&\qquad\qquad\qquad + \int_{\Om\setminus \Om_{8\lambda R}}
\int_{\Om\cap B(y,R)}\frac{|f(x)-f(y)|^p}{d(x,y)^p}\rho_i(x,y)\,d\mu(x)\,d\mu(y).
\end{align*}
For the first term, we estimate 
\begin{align*}
	&\int_{\Om} \int_{\Om\setminus B(y,R)}\frac{|f(x)-f(y)|^p}{d(x,y)^p}\rho_i(x,y)\,d\mu(x)\,d\mu(y)\\
	&\qquad \le 2^{p}\int_{\Om} \int_{\Om\setminus B(y,R)}\frac{|f(x)|^p+|f(y)|^p}{d(x,y)^p}\rho_i(x,y)\,d\mu(x)\,d\mu(y)\\
	&\qquad \le 2^{p}\int_{\Om}|f(y)|^p \int_{\Om\setminus B(y,R)}\frac{\rho_i(x,y)}{d(x,y)^p}\,d\mu(x)\,d\mu(y)\\
	&\qquad \qquad +2^{p}\int_{\Om}|f(x)|^p \int_{\Om\setminus B(x,R)}\frac{\rho_i(x,y)}{d(x,y)^p}\,d\mu(y)\,d\mu(x)\quad\textrm{by Fubini}\\
	&\qquad \le 2^{p}\int_{\Om}|f|^p\,d\mu 
	\left(\sup_{y\in \Om}\int_{\Om\setminus B(y,R)}\frac{\rho_i(x,y)}{d(x,y)^p}\,d\mu(x)
	+\sup_{x\in \Om}\int_{\Om\setminus B(x,R)}\frac{\rho_i(x,y)}{d(x,y)^p}\,d\mu(y)\right)\\
	&\qquad \to 0
\end{align*}
as $i\to\infty$ by \eqref{eq:radius one condition}.

For the second term, we get from Proposition \ref{prop:one direction} that
\begin{align*}
 \int_{\Om_{8\lambda R}} \int_{B(y,R)}\frac{|f(x)-f(y)|^p}{d(x,y)^p}\rho_i(x,y)\,d\mu(x)\,d\mu(y)
  &\le CE_{f,p}(\Om_{8\lambda R}(8\lambda R))\\
  &\le CE_{f,p}(\Om).
\end{align*}

Then we estimate the third term.
Since $\Om$ is a strong $p$-extension domain,
we find an extension $F\in\BV(X)$ in the case $p=1$,
and $F\in \widehat{N}^{1,p}(X)$ in the case $1<p<\infty$,
and in both cases $E_{F,p}(\partial \Om)=0$.
Write $U:=\Om\setminus \Om_{8\lambda R}$,
and note that $U(8\lambda R)\subset \Om(8\lambda R)\setminus \Om_{16\lambda R}$.
Thus we can estimate the third term by
\begin{align*}
&\limsup_{i\to\infty}\int_{\Om\setminus \Om_{8\lambda R}}
 \int_{\Om\cap B(y,R)}\frac{|f(x)-f(y)|^p}{d(x,y)^p}\rho_i(x,y)\,d\mu(x)\,d\mu(y)\\
 &\qquad \le \limsup_{i\to\infty}\int_{U}
 \int_{B(y,R)}\frac{|F(x)-F(y)|^p}{d(x,y)^p}\rho_i(x,y)\,d\mu(x)\,d\mu(y)\\
&\qquad \le CE_{F,p}(\Om(8\lambda R)\setminus \Om_{16\lambda R})
\end{align*}
by Proposition \ref{prop:one direction}.
This goes to zero as $R\to 0$, since $E_{F,p}(\partial \Om)=0$.
Combining the three terms, we get
\[
\limsup_{i\to\infty}\int_{\Om} \int_{\Om}\frac{|f(x)-f(y)|^p}{d(x,y)^p}\rho_i(x,y)\,d\mu(x)\,d\mu(y)
\le CE_{f,p}(\Om).
\]
\end{proof}

\section{Lower bound of  Theorem \ref{thm:main}}\label{sec:other direction}

In this section we prove the lower bound of Theorem \ref{thm:main}.
Note that in this section we do not need to assume a Poincar\'e inequality.
As usual, $\Om\subset X$ is an open set.
Given a ball $B=B(x,r)$ with a specific center $x\in X$ and radius $r>0$, we denote
$2B:=B(x,2r)$.
The distance between two sets $A,D\subset X$ is denoted by
\[
\dist(A,D):=\inf\{d(x,y)\colon x\in A,\,y\in D\}.
\]

\begin{lemma}\label{lem:covering lemma}
Consider an open set $U\subset \Om$ with $\dist(U,X\setminus \Om)>0$,
and a scale $0<R<\dist(U,X\setminus \Om)/10$.
Then we can choose an at most countable covering
$\{B_j=B(x_j,R)\}_{j}$ of $U(5R)$ such that $x_j\in U(5R)$,
each ball $5B_j$ is contained in $\Om$, and the balls $\{5B_j\}_{j=1}^{\infty}$ can be divided
into at most $C_d^8$ collections of pairwise disjoint balls.
\end{lemma}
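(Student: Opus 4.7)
The plan is to carry out a standard $5r$-covering (Vitali-type) construction and then use the doubling property to bound the overlap of the enlarged balls, after which a greedy coloring supplies the required decomposition into pairwise disjoint subfamilies.

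First I would choose, by Zorn's lemma (using that a complete doubling metric space is separable, so any pairwise disjoint family of balls of positive radius is countable), a maximal pairwise disjoint subfamily $\{B(x_j,R/5)\}_j$ of $\{B(x,R/5):x\in U(5R)\}$, with all centers $x_j\in U(5R)$. Maximality forces every $x\in U(5R)$ to lie within distance $2R/5<R$ of some $x_j$, so $\{B_j=B(x_j,R)\}_j$ covers $U(5R)$. The inclusion $5B_j\subset\Om$ is immediate from $\dist(U,X\setminus\Om)>10R$: pick $u\in U$ with $d(x_j,u)<5R$, then any $y\in X\setminus\Om$ satisfies $d(x_j,y)\ge d(u,y)-d(u,x_j)>10R-5R=5R$.

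The substantive step is the overlap bound for $\{5B_j\}$. If $5B_i\cap 5B_j\ne\emptyset$ with $i\ne j$, then $d(x_i,x_j)<10R$, so the disjoint ball $B(x_i,R/5)$ is contained in $B(x_j,11R)$. Since $B(x_j,11R)\subset B(x_i,21R)\subset B(x_i,2^{7}\cdot R/5)$, iterating the doubling inequality yields $\mu(B(x_j,11R))\le C_d^{7}\mu(B(x_i,R/5))$. Summing over all $i$ with $5B_i\cap 5B_j\ne\emptyset$, whose disjoint $R/5$-balls fit inside $B(x_j,11R)$, shows that at most $C_d^{7}$ indices $i\ne j$ satisfy $5B_i\cap 5B_j\ne\emptyset$.

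To produce the promised decomposition I would enumerate the balls as $5B_1,5B_2,\ldots$ and greedily assign to each $5B_j$ a ``color'' from $\{1,\ldots,C_d^{7}+1\}$ not already used by any previously processed $5B_i$ meeting $5B_j$; since at most $C_d^{7}$ colors are ever forbidden, such a choice always exists, and the resulting $C_d^{7}+1\le C_d^{8}$ color classes are pairwise disjoint subfamilies of $\{5B_j\}$. The only real obstacle is bookkeeping the doubling exponents so as to land exactly at the stated constant $C_d^{8}$; the underlying covering argument is otherwise routine.
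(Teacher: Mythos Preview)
Your argument is correct and follows the same route as the paper: a $5r$-covering to produce the family $\{B_j\}$, a doubling estimate to bound the overlap of the balls $5B_j$, and a greedy selection to split them into pairwise disjoint subfamilies. You in fact obtain the slightly sharper overlap bound $C_d^{7}$ (the paper records $C_d^{8}$), and you also spell out the inclusion $5B_j\subset\Om$, which the paper leaves implicit.

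One bookkeeping slip: the inequality $C_d^{7}+1\le C_d^{8}$ is not valid for $C_d$ close to $1$. This is harmless, because your own count already shows that the set of indices $i$ with $5B_i\cap 5B_j\neq\emptyset$, \emph{including} $i=j$, has cardinality at most $C_d^{7}$; hence each $5B_j$ meets at most $C_d^{7}-1$ others, the greedy coloring needs at most $\lfloor C_d^{7}\rfloor\le C_d^{7}$ colors, and $C_d^{7}\le C_d^{8}$ holds for every $C_d\ge 1$.
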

\begin{proof}
Consider a covering $\{B(x,R/5)\}_{x\in U(5R)}$.
By the $5$-covering theorem, see e.g. \cite[p. 60]{HKST},
we can choose a countable collection of disjoint balls
$\{B(x_j,R/5)\}_j$ such that the balls $B_j =B(x_j,R)$ cover $U(5R)$.
Consider a ball $B_j$ and denote by $I_j$ those $k\in \N$ for which $5 B_k\cap 5B_j\neq \emptyset$.
Then
\begin{align*}
\sum_{k\in I_j}\mu(B_k)
\le C_d^3\sum_{k\in I_j}\mu(\tfrac 15 B_k)
\le C_d^3\mu(11B_j)
&\le C_d^3\mu(21B_l)\quad\textrm{for any }l\in I_j\\
&\le C_d^8 \mu(B_l),
\end{align*}
and so $I_j$ has cardinality at most $C_d^8$.
We can recursively choose maximal collections of pairwise disjoint balls $5B_j$.
After at most $C_d^8$ steps, we have exhausted all of the balls $5B_j$.
\end{proof}

Let $C_0:=3C_d^8$.
Given such a covering of $U(5R)$, 
we can take a partition of unity $\{\phi_j\}_{j=1}^{\infty}$ subordinate to the
covering, such that $0\le \phi_j\le 1$,
\begin{equation}\label{eq:Lipschitz function}
	\textrm{each } \phi_j  \textrm{ is a }C_0/R\textrm{-Lipschitz function},
	\end{equation}
	and $\supp(\phi_j)\subset 2B_j$ for each 
$j\in\N$;
see e.g. \cite[p. 104]{HKST}.
Finally, we can define a \emph{discrete convolution} $h$ of 
any $f\in L^1(\Om)$ with respect to the covering by
\[
h:=\sum_{j}f_{B_j}\phi_j.
\]
Clearly $h\in \Lip_{\loc}(U)$.

\begin{theorem}\label{thm:lower bound}
	Suppose $\rho_i$ is a sequence of mollifiers satisfying
	\eqref{eq:rho hat minorize}.
	Suppose $f\in L^p(\Om)$. Then
	\begin{equation}\label{eq:main theorem equation lower}
			C_1 E_{f,p}(\Om)
			\le \liminf_{i\to\infty}\int_{\Om}\int_{\Om} \frac{|f(x)-f(y)|^p}{d(x,y)^p}\rho_i(x,y)\,d\mu(x)\,d\mu(y)
	\end{equation}
	for some constant $C_1$ depending only on $C_{\rho}$ and on the doubling constant of the measure.
\end{theorem}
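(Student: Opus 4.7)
The plan is to reduce the claim to bounding $E_{f,p}(U)$ for each open $U$ with $\overline{U}\subset\Om$ and $\dist(U,X\setminus\Om)>0$, and then exhaust $\Om$ by such sets using that $E_{\cdot,p}$ is a Borel measure. Fix such a $U$ and work at a scale $R$ with $10R<\dist(U,X\setminus\Om)$; apply Lemma \ref{lem:covering lemma} to cover $U(5R)$ by controlled-overlap balls $\{B_j=B(x_j,R)\}_j$, take a subordinate $(C_0/R)$-Lipschitz partition of unity $\{\phi_j\}$, and form the discrete convolution
\[
h_R:=\sum_j f_{B_j}\phi_j.
\]
A telescoping computation based on $h_R(z)-h_R(w)=\sum_k(f_{B_k}-f_{B_j})(\phi_k(z)-\phi_k(w))$ for $z,w\in B_j$, together with the bounded overlap of $\{5B_j\}$, yields the pointwise upper gradient bound
\[
g_{h_R}(z)\le \frac{C}{R}\vint_{B(z,CR)}\bigl|f-f_{B(z,CR)}\bigr|\,d\mu\quad\text{for every }z\in U.
\]
Raising this to the $p$-th power via Jensen's inequality, integrating over $z\in U$, and then applying Fubini together with the doubling condition, we obtain
\[
\int_U g_{h_R}^p\,d\mu \le \frac{C}{R^p}\int_\Om \vint_{B(x,CR)}|f(x)-f(y)|^p\,d\mu(y)\,d\mu(x),
\]
while $h_R\to f$ in $L^p_{\loc}(U)$ as $R\to 0$ by the Lebesgue differentiation theorem.

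Next I invoke \eqref{eq:rho hat minorize} to dominate the right-hand side by the nonlocal functional $I_i$. In alternative (a), set $R_i:=r_i$; then the bound $\rho_i(x,y)\ge C_\rho^{-1}R_i^{-p}d(x,y)^p\ch_{B(y,R_i)}(x)/\mu(B(y,R_i))$ converts directly into $\int_U g_{h_{R_i}}^p\,d\mu\le C I_i$. In alternative (b), the layer-cake rewriting
\[
I_i\ge \int_0^\infty\!\int_X\vint_{B(y,t)}|f(x)-f(y)|^p\,d\mu(x)\,d\mu(y)\,d\nu_i(t),
\]
combined with a diagonal choice of $R_i\searrow 0$ satisfying $\int_0^{R_i} t^p\,d\nu_i\ge (2C_\rho)^{-1}$ for all large $i$ (possible by \eqref{eq:conditions on nui}), reduces the discrete-convolution energy at scale $R_i$ (or at a suitable scale adapted to the bulk of $t^p\,d\nu_i$) to $CI_i$ as well, after another application of doubling.

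Finally, I close with a lower-semicontinuity argument. If $p=1$, the bound $\int_U g_{h_{R_i}}\,d\mu\le CI_i$ together with $h_{R_i}\to f$ in $L^1_{\loc}(U)$ gives $\Vert Df\Vert(U)\le C\liminf_i I_i$ directly from the definition of the variation measure. If $1<p<\infty$, the sequence $\{g_{h_{R_i}}\}$ is bounded in $L^p(U)$ and therefore admits a weak $L^p$-subsequential limit $g$; Theorem \ref{thm:Mazur lemma} yields convex combinations $\widetilde{g}_i\to g$ strongly in $L^p(U)$, the corresponding convex combinations $\widetilde{h}_i$ of the $h_{R_i}$ converge to $f$ in $L^p(U)$, and Lemma \ref{lem:Fuglede lemma} applied along the strong convergence $\widetilde{g}_i\to g$ allows me to verify that $g$ is a $p$-weak upper gradient of (a representative of) $f$, whence $\int_U g_f^p\,d\mu\le \int_U g^p\,d\mu\le C\liminf_i I_i$. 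Sending $U\nearrow\Om$ concludes the proof. The main obstacle is alternative (b) of \eqref{eq:rho hat minorize}: the lower bound on $\rho_i$ is expressed only through a scale-integrated measure $\nu_i$, and passing to a comparable single-scale estimate requires the diagonal selection of $R_i$ and a careful layer-cake manipulation exploiting both \eqref{eq:conditions on nui} and doubling.
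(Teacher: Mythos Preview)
Your plan is correct and follows essentially the same route as the paper: discrete convolutions built from the covering of Lemma~\ref{lem:covering lemma}, a telescoping estimate for $\Lip_{h_R}$, reduction to a single-scale functional $\tfrac{1}{t^p}\int_\Om\vint_{B(y,t)}|f(x)-f(y)|^p$, the pigeonhole argument against $t^p\,d\nu_i$ for alternative~(b), and the Mazur--Fuglede closure for $1<p<\infty$ (definition of $\Vert Df\Vert$ for $p=1$), followed by exhaustion of $\Om$.

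One point to sharpen in your write-up of alternative~(b): the $R_i$ you select with $\int_0^{R_i} t^p\,d\nu_i\ge(2C_\rho)^{-1}$ is a \emph{threshold}, not the scale of the discrete convolution. The actual scale comes from averaging: since $\int_0^{R_i}\frac{G(t)}{t^p}\,t^p\,d\nu_i(t)\le I_i$ where $G(t)=\int_\Om\vint_{B(y,t)}|f(x)-f(y)|^p$, there exists $t_i\in(0,R_i]$ with $G(t_i)/t_i^p\le 2C_\rho I_i$, and it is $t_i/C$ that serves as the convolution scale. Your parenthetical ``or at a suitable scale adapted to the bulk of $t^p\,d\nu_i$'' shows you see this, but make it explicit; this is exactly how the paper proceeds (after first passing to a subsequence along which $I_i\le M+\eps$).
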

Note that here we do not impose any conditions on the open set $\Om\subset X$.

\begin{proof}
We can assume that
\[
\liminf_{i\to\infty}\int_{\Om}\int_{\Om} \frac{|f(x)-f(y)|^p}{d(x,y)^p}\rho_i(x,y)\,d\mu(x)\,d\mu(y)=:M<\infty.
\]
Fix $0<\eps<1$. Passing to a subsequence (not relabeled), we can assume that
\[
\liminf_{i\to\infty}\int_{\Om}\int_{\Om}
|f(x)-f(y)|^p\rho_i(x,y)\,d\mu(x)\,d\mu(y)\le M+\eps \quad\textrm{for all }i\in\N.
\]
Assuming the second option of \eqref{eq:rho hat minorize}, we get
\[
\int_{\Om}\int_{\Om}
|f(x)-f(y)|^p\frac{\nu_i((d(x,y),\infty))}{\mu(B(y,d(x,y)))}\ch_{B(y,1)}(x)\,d\mu(x)\,d\mu(y)\le M+\eps.
\]
It follows that
\begin{align*}
M +\eps&\ge  \int_{\Om}\int_{\Om}\int_{d(x,y)}^{\infty}
|f(x)-f(y)|^p\frac{1}{\mu(B(y,d(x,y)))}\ch_{B(y,1)}(x)\,d\nu_i(t)\,d\mu(x)\,d\mu(y)\\
&= 
\int_{0}^{\infty}\iint_{\{x,y\in\Om\colon d(x,y)< t\}}
\frac{|f(x)-f(y)|^p}{t^p}\frac{\ch_{B(y,1)}(x)}{\mu(B(y,d(x,y)))}\,d\mu(x)\,d\mu(y)\,t^p\,d\nu_i(t)
\end{align*}
by Fubini's theorem.
By \eqref{eq:conditions on nui},
given an arbitrarily small $0<\delta<1$,
we have
\[
\liminf_{i\to\infty}\int_0^{\delta} t^{p}\,d\nu_i\ge C_{\rho}^{-1},
\]
and so
$\int_0^{\delta} t^{p}\,d\nu_i\ge (1-\eps)C_{\rho}^{-1}$ for all sufficiently large $i\in\N$.
Then there necessarily exists $0<t\le \delta$ such that
\[
\iint_{\{x,y\in\Om\colon d(x,y)< t\}}
\frac{|f(x)-f(y)|^p}{t^p}\frac{\ch_{B(y,1)}(x)}{\mu(B(y,d(x,y)))}\,d\mu(x)\,d\mu(y)
\le \frac{M +\eps}{(1-\eps)C_{\rho}^{-1}}.
\]
In other words, we find arbitarily small $t>0$ such that
\begin{equation}\label{eq:t arbitrarily small}
\int_{\Om}\int_{\Om} \frac{|f(x)-f(y)|^p}{t^p}
\frac{\ch_{B(y,t)\cap \Om}(x)}{\mu(B(y,t))}\,d\mu(x)\,d\mu(y)
\le \frac{(M+\eps)C_{\rho}}{1-\eps}.
\end{equation}
We obviously obtain this also if the first option of \eqref{eq:rho hat minorize} holds.
Fix a small $t>0$.
Let $U\subset \Om$ with $\dist(U,X\setminus \Om)>t$, and
let $R:=t/10$.
Consider a covering $\{B_j\}_{j=1}^{\infty}$ of $U(5R)$ at scale $R>0$,
as described in Lemma \ref{lem:covering lemma}. Then consider the discrete convolution
\[
h:=\sum_{j}f_{B_j}\phi_j.
\]
We define the pointwise asymptotic Lipschitz number by
\[
\Lip_h(x):=\limsup_{r\to 0}\frac{\sup_{y\in  B(x,r)}|h(y)-h(x)|}{r},\quad x\in U.
\]
Suppose $x\in U$. Then $x\in B_j$ for some $j\in\N$. Consider any other point $y\in B_j$.
Denote by $I_j$ those $k\in\N$ for which $2B_k \cap 2B_j\neq \emptyset$.
We estimate
\begin{equation}\label{eq:upper gradient of discrete convolution}
	\begin{split}
|h(x)-h(y)|
& = \left|\sum_{k\in I_j}f_{B_k}(\phi_k(x)-\phi_k(y))\right|\\
& = \left|\sum_{k\in I_j}(f_{B_k}-f_{B_j})(\phi_k(x)-\phi_k(y))\right|\\
&\le \frac{C_0d(x,y)}{R}\left(\sum_{k\in I_j}\,\vint{B_k}|f-f_{5B_j}|\,d\mu
+\sum_{k\in I_j}\,\vint{B_j}|f-f_{5B_j}|\,d\mu\right)\quad\textrm{by }\eqref{eq:Lipschitz function}\\
&\le \frac{2C_0C_d^3d(x,y)}{R}\sum_{k\in I_j}\,\vint{5B_j}|f-f_{5B_j}|\,d\mu\\
&\le \frac{2C_0^2 C_d^3d(x,y)}{R}\,\vint{5B_j}\,\vint{5B_j}|f(z)-f(w)|\,d\mu(z)\,d\mu(w),
	\end{split}
\end{equation}
since by Lemma \ref{lem:covering lemma} we know that $I_j$ has cardinality at most $C_0$.
Letting $y\to x$, we obtain an estimate for $\Lip_h$ in the ball $B_j$.
In total, we conclude (we track the constants for a while in order to make the estimates more explicit)
\[
\Lip_h
\le \frac{2C_0^2 C_d^3}{R}\sum_{j}\,\ch_{B_j}\vint{5B_j}\,\vint{5B_j}|f(x)-f(y)|\,d\mu(x)\,d\mu(y).
\]
Since the balls $\{B_j\}_{j}$ can be divided
into at most $C_0$ collections of pairwise disjoint balls, we get
\begin{equation}\label{eq:upper gradient of discrete convolution}
	\begin{split}
(\Lip_h)^p
&\le \frac{(2C_0^2 C_d^3)^p C_0^p}{R^p}
\sum_{j}\ch_{B_j}\left(\vint{5B_j}\,\vint{5B_j}|f(x)-f(y)|^p\,d\mu(x)\,d\mu(y)\right)^p\\ 
&\le \frac{(2C_0^2 C_d^3)^p C_0^p}{R^p}
\sum_{j}\ch_{B_j}\vint{5B_j}\,\vint{5B_j}|f(x)-f(y)|^p\,d\mu(x)\,d\mu(y)
\quad\textrm{by H\"older}\\ 
&\le \frac{(2C_0^2 C_d^3)^p (10C_0)^p C_d^2}{(10R)^p}
\sum_{j}\ch_{B_j}\vint{5B_j}\int_{5B_j}|f(x)-f(y)|^p\frac{\ch_{B(y,10R)}(x)}
{\mu(B(y,10R))}\,d\mu(x)\,d\mu(y).
	\end{split}
\end{equation}
Thus
\begin{equation}\label{eq:Lip h estimate}
\begin{split}
\int_U (\Lip_h)^p\,d\mu
&\le \frac{(2C_0^2 C_d^3)^p (10C_0)^p C_d^2}{(10R)^p }
\sum_{j}\int_{5B_j}\int_{5B_j}|f(x)-f(y)|^p\frac{\ch_{B(y,10R)}(x)}
{\mu(B(y,10R))}\,d\mu(x)\,d\mu(y)\\
&\le \frac{(2C_0^2 C_d^3)^p (10C_0)^p C_d^2 C_0}{(10R)^p}\int_{\Om}\int_{\Om}|f(x)-f(y)|^p\frac{\ch_{B(y,10R)\cap \Om}(x)}
{\mu(B(y,10R))}\,d\mu(x)\,d\mu(y)\\
&\le C\frac{(M+\eps)C_{\rho}}{(1-\eps)}
\end{split}
\end{equation}
by \eqref{eq:t arbitrarily small}, with $C:=(2C_0^2 C_d^3)^p (10C_0)^pC_d^2 C_0$.
We know that the minimal $p$-weak upper gradient $g_{h}$ of $h$ in $U$ satisfies $g_{h}\le \Lip_h$ $\mu$-a.e. in $U$,
 see e.g. \cite[Proposition 1.14]{BB}.
 
Recall that we can do the above for arbitrarily small $t>0$ and thus arbitrarily small $R>0$.
From now on, we can consider any open $U\subset \Om$ with $\dist(U,X\setminus \Om)>0$.
We get a sequence of discrete convolutions $\{h_i\}_{i=1}^{\infty}$ corresponding
to scales $R_i\searrow 0$, such that $\{g_{h_i}\}_{i=1}^{\infty}$ is a bounded sequence in
$L^p(U)$. From the properties of discrete convolutions, see e.g. \cite[Lemma 5.3]{HKT},
we know that $h_i\to f$ in $L^p(U)$.
Passing to a subsequence (not relabeled), we also have $h_i(x)\to f(x)$ for $\mu$-a.e. $x\in U$.
When $p=1$, we get
\[
\Vert Df\Vert(U)
\le \liminf_{i\to\infty}\int_U g_{h_i}\,d\mu
\le \liminf_{i\to\infty}\int_U \Lip_{h_i}\,d\mu
\le  C\frac{(M+\eps)C_{\rho}}{(1-\eps)},
\]
and so $f\in \BV(U)$.
In the case $1<p<\infty$,
by reflexivity of the space $L^p(U)$, we find a subsequence of $\{h_i\}_{i=1}^{\infty}$ (not relabeled)
and $g\in L^p(U)$ such that $g_{h_i}\to g$ weakly in $L^p(U)$ (see e.g. \cite[Section 2]{HKST}).
By Mazur's lemma (Theorem \ref{thm:Mazur lemma}), for suitable convex combinations we get the
strong convergence $\sum_{l=i}^{N_i}a_{i,l} g_{h_l}\to g$ in $L^p(U)$.
We still have $\sum_{l=i}^{N_i}a_{i,l} h_l(x)\to f(x)$ for $\mu$-a.e. $x\in U$.
Define
\[
\widetilde{h}(x):=
\limsup_{i\to\infty}\sum_{l=i}^{N_i}a_{i,l} h_l(x),\quad x\in U.
\]
Then $\widetilde{h}=f$ $\mu$-a.e. in $U$.
Denote $N:=\{x\in U\colon |\widetilde{h}(x)|<\infty\}$, so that $\mu(N)=0$.
For $p$-a.e. curve $\gamma$ in $U$, denoting the end points by $x,y$, we have that either
$x\notin N$ or $y\notin N$; see \cite[Corollary 1.51]{BB}.
For such $\gamma$, we obtain
\[
|\widetilde{h}(x)-\widetilde{h}(y)|
\le \limsup_{i\to\infty}\left|\sum_{l=i}^{N_i}a_{i,l} h_l(x)-\sum_{l=i}^{N_i}a_{i,l} h_l(y)\right|
\le \limsup_{i\to\infty}\int_{\gamma}\sum_{l=i}^{N_i}a_{i,l} g_{h_l}\,ds
= \int_{\gamma}g\,ds
\]
by Fuglede's lemma (Lemma \ref{lem:Fuglede lemma}), exclusing another curve family of zero $p$-modulus.
Hence $g$ is a $p$-weak upper gradient of $\widetilde{h}$ in $U$, and so
for the minimal $p$-weak upper gradient we have
\[
\int_U g_{\widetilde{h}}^p\,d\mu
\le \int_U g^p\,d\mu
\le \limsup_{i\to\infty}\int_U g_{h_i}^p\,d\mu
\le C\frac{(M+\eps)C_{\rho}}{(1-\eps)}
\]
by \eqref{eq:Lip h estimate}.
Since $f=\widetilde{h}$ $\mu$-a.e. in $U$, we have $f\in \widehat{N}^{1,p}(U)$.
Note that now $E_{f,p}$ is a Radon measure on $\Om$.
Exhausting $\Om$ by sets $U$, in both cases we obtain
\begin{align*}
E_{f,p}(\Om)
&\le C\frac{(M+\eps)C_{\rho}}{(1-\eps)}\\
&=\frac{CC_{\rho}}{1-\eps}\liminf_{i\to\infty}
\left[\int_{\Om}
\int_{\Om} \frac{|f(x)-f(y)|^p}{d(x,y)^p}\rho_i(x,y)\,d\mu(x)\,d\mu(y)+\eps\right].
\end{align*}
Letting $\eps\to 0$, this proves \eqref{eq:main theorem equation lower}.
\end{proof}

\section{Corollaries}\label{sec:quasidecreasing}

The conditions \eqref{eq:rho hat minorize}--\eqref{eq:radius one condition} that we impose on the
mollifiers $\rho_i$ are quite flexible, and so we can obtain various existing results in the
literature as special cases of our main Theorem \ref{thm:main}.
The following is essentially \cite[Theorem 1.4]{DiMaS}, except that we consider an open set $\Om$
instead of the whole space $X$.

\begin{corollary}\label{cor:DiMarino}
	Suppose $X$ supports a $(p,p)$--Poincar\'e inequality.
	Let $\Om\subset X$ be a strong $p$-extension domain, and let $f\in L^p(\Om)$. Then
	\begin{equation}
		\begin{split}
			&C^{-1}E_{f,p}(\Om)
			\le \liminf_{s\nearrow 1}(1-s)\int_{\Om}\int_{\Om}\frac{|f(x)-f(y)|^p}{d(x,y)^{ps}\mu(B(y,d(x,y)))}\,d\mu(x)\,d\mu(y)\\
			&\qquad \le \limsup_{s\nearrow 1}(1-s)\int_{\Om}\int_{\Om}\frac{|f(x)-f(y)|^p}{d(x,y)^{ps}\mu(B(y,d(x,y)))}\,d\mu(x)\,d\mu(y)
			\le CE_{f,p}(\Om)
		\end{split}
	\end{equation}
	for some constant $C\ge 1$ depending only on $p$, the doubling constant of the measure, and the constants in the
	Poincar\'e inequality.
\end{corollary}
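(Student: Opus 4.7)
The plan is to realize Corollary \ref{cor:DiMarino} as a direct application of Theorem \ref{thm:main} to the family of mollifiers
\[
\rho_{s}(x,y):=(1-s)\,\frac{d(x,y)^{p(1-s)}}{\mu(B(y,d(x,y)))},\qquad s\in(0,1),
\]
chosen so that $\rho_s(x,y)/d(x,y)^p$ equals the integrand appearing in the corollary. Fixing an arbitrary sequence $s_i\nearrow 1$ and applying Theorem \ref{thm:main} to $\{\rho_{s_i}\}_{i=1}^\infty$ with constants $C_1,C_2$ independent of the sequence yields the claim on the continuous $\liminf_{s\nearrow 1}$ and $\limsup_{s\nearrow 1}$. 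So the work is reduced to verifying \eqref{eq:rho hat minorize}--\eqref{eq:radius one condition} uniformly in $i$.

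For the lower bound condition \eqref{eq:rho hat minorize}, I would use its second option, defining the positive Radon measure $\nu_i$ on $[0,\infty)$ through the tail
\[
\nu_i((t,\infty)):=(1-s_i)\,t^{-ps_i},\qquad t>0,
\]
which has density $d\nu_i(t)=(1-s_i)ps_i\,t^{-ps_i-1}\,dt$. A direct substitution shows $\rho_{s_i}(x,y)=d(x,y)^p\,\nu_i((d(x,y),\infty))/\mu(B(y,d(x,y)))$, so the inequality holds with equality. Computing
\[
\int_0^\delta t^p\,d\nu_i=(1-s_i)ps_i\int_0^\delta t^{p(1-s_i)-1}\,dt=s_i\,\delta^{p(1-s_i)}\xrightarrow[s_i\nearrow 1]{}1,
\]
verifies \eqref{eq:conditions on nui} with any $C_\rho\ge 1$.

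For the upper bound condition \eqref{eq:rho hat majorize}, on the annulus $2^{-j}<d(x,y)\le 2^{-j+1}$ the doubling property gives
\[
\rho_{s_i}(x,y)\le C_d(1-s_i)\,(2^{-j+1})^{p(1-s_i)}\,\frac{1}{\mu(B(y,2^{-j+1}))},
\]
so setting $d_{i,j}:=C_d(1-s_i)(2^{-j+1})^{p(1-s_i)}$ one computes
\[
\sum_{j=1}^\infty d_{i,j}=\frac{C_d(1-s_i)}{1-2^{-p(1-s_i)}},
\]
which remains bounded by a constant depending only on $C_d$ and $p$, uniformly for $s_i\in[0,1)$ (the limit as $s_i\nearrow 1$ is $C_d/(p\ln 2)$). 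This provides an acceptable $C_\rho$. Finally, for the tail condition \eqref{eq:radius one condition}, I would decompose $\Om\setminus B(y,\delta)$ into dyadic annuli $B(y,2^{k+1}\delta)\setminus B(y,2^k\delta)$, estimate $d(x,y)^{-ps_i}\le (2^k\delta)^{-ps_i}$ on each, and use doubling to bound the measure of each annulus by $C_d\,\mu(B(y,d(x,y)))$; summing the geometric series gives
\[
(1-s_i)\int_{\Om\setminus B(y,\delta)}\frac{d(x,y)^{-ps_i}}{\mu(B(y,d(x,y)))}\,d\mu(x)\le\frac{C_d(1-s_i)\,\delta^{-ps_i}}{1-2^{-ps_i}}\xrightarrow[s_i\nearrow 1]{}0
\]
uniformly in $y$, with the symmetric estimate identical by exchanging $x$ and $y$. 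With the three conditions in hand, Theorem \ref{thm:main} delivers both bounds simultaneously. The only delicate point is matching the normalizing factor $(1-s)$ with the correct power of $d(x,y)$ so that the $\nu_i$-integral in \eqref{eq:conditions on nui} stays bounded away from zero; the rest is bookkeeping with doubling and geometric series.
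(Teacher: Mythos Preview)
Your proposal is correct and follows essentially the same route as the paper's own proof: identify $\rho_{s_i}(x,y)=(1-s_i)d(x,y)^{p(1-s_i)}/\mu(B(y,d(x,y)))$, verify the second option of \eqref{eq:rho hat minorize} via $d\nu_i(t)=ps_i(1-s_i)t^{-ps_i-1}\,dt$, bound the dyadic coefficients $d_{i,j}=C_d(1-s_i)2^{(-j+1)p(1-s_i)}$ by a geometric series, and check the tail condition by a dyadic annular decomposition. The only cosmetic differences are that the paper bounds $\sum_j d_{i,j}$ by comparison with $\int_0^2 t^{p(1-s_i)-1}\,dt$ whereas you sum the series exactly, and the paper's tail decomposition uses scales $2^{-j}$ rather than $2^k\delta$; both lead to the same estimate $C_d(1-s_i)\delta^{-ps_i}/(1-2^{-ps_i})\to 0$. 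One small remark: your ``symmetric estimate identical by exchanging $x$ and $y$'' needs a single extra doubling step, since $\mu(B(y,d(x,y)))$ and $\mu(B(x,d(x,y)))$ differ by at most $C_d$; the paper likewise leaves this implicit.
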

\begin{proof}
	This is obtained from Theorem \ref{thm:main} with the choice of mollifiers
	\[
	\rho_i(x,y):=(1-s_i)\frac{1}{d(x,y)^{p(s_i-1)}\mu(B(y,d(x,y)))},\quad x,y\in X,
	\]
	where $s_i\nearrow 1$ as $i\to\infty$.
	We only need to check that conditions \eqref{eq:rho hat minorize}--\eqref{eq:radius one condition}
	are satisfied.
	We have
	\[
	\rho_i(x,y)=d(x,y)^p\frac{\nu_i((d(x,y),\infty))}{\mu(B(y,d(x,y)))}
	\quad \textrm{with}\quad
	d\nu_i(t):=ps_i(1-s_i)t^{-ps_i-1}\,dt,
	\]
	and so
	\[
	\liminf_{i\to\infty}\int_{0}^{\delta}t^p\,d\nu_i
	=p\liminf_{i\to\infty}s_i(1-s_i)\int_{0}^{\delta}t^{-p(s_i-1)-1}\,dt
	=1\quad\textrm{for all }\delta>0,
	\]
	satisfying the second option of \eqref{eq:rho hat minorize}, and 
	\eqref{eq:conditions on nui}.
	
	For every $x,y\in X$ with $0<d(x,y)\le 1$, we have
	\[
	(1-s_i)\frac{1}{d(x,y)^{p(s_i-1)}\mu(B(y,d(x,y)))}
	\le \sum_{j=1}^{\infty}d_{i,j} \frac{\ch_{B(y,2^{-j+1})
			\setminus B(y,2^{-j})}(x)}{\mu(B(y,2^{-j+1}))}
	\]
	with $d_{i,j}=C_d(1-s_i)2^{(-j+1)p(1-s_i)}$.
	Here
	\[
	\sum_{j=1}^{\infty}2^{(-j+1)p(1-s_i)}
	\le 2\int_0^{2} t^{p(1-s_i)-1}\,dt
	=\frac{2^{1+p(1-s_i)}}{p(1-s_i)}
	\le \frac{2^{1+p}}{p(1-s_i)},
	\]
	and so
	\[
	\sum_{j=0}^{\infty}d_{i,j}\le \frac{2^{1+p}C_d}{p},
	\]
	satisfying \eqref{eq:rho hat majorize}.
	Finally, we estimate
	\begin{align*}
	\frac{\rho_i(x,y)}{d(x,y)^p}
	&=(1-s_i)\frac{1}{d(x,y)^{p s_i}\mu(B(y,d(x,y)))}\\
	&\le C_d(1-s_i)\sum_{j\in \Z}2^{jps_i} \frac{\ch_{B(y,2^{-j+1})
			\setminus B(y,2^{-j})}(x)}{\mu(B(y,2^{-j+1}))},
	\end{align*}
	and so (the notation $\sum_{j\le  -\log_2 \delta}$ means that we sum over
	integers $j$ at most $-\log_2 \delta$)
	\begin{align*}
	\int_{X\setminus B(y,\delta)}\frac{\rho_i(x,y)}{d(x,y)^p}\,d\mu(x)
	&\le C_d(1-s_i) \sum_{j\le  -\log_2 \delta}2^{jps_i}\\
	&\le C_d (1-s_i)\frac{\delta^{-ps_i}}{1-2^{-ps_i}}\\
	&\to 0\quad\textrm{as }i\to\infty,
	\end{align*}
	and so \eqref{eq:radius one condition} holds.
\end{proof}

In particular, in the Euclidean setting, the functional considered in the
Corollary \ref{cor:DiMarino} reduces
to the fractional Sobolev seminorm
\[
\int_{\Om}\int_{\Om}\frac{|f(x)-f(y)|^p}{|x-y|^{N+sp}}\,dx\,dy.
\]

The functional appearing in the following corollary was previously considered by
Marola--Miranda--Shanmugalingam \cite{MMS} as well as G\'orny \cite{Gor} and Han--Pinamonti \cite{HP}
\begin{corollary}\label{cor:Gorny}
	Suppose $X$ supports a $(p,p)$--Poincar\'e inequality, and let $f\in L^p(X)$. Then
	\begin{equation}
		\begin{split}
			C^{-1}E_{f,p}(X)
			&\le \liminf_{r\searrow 0}\frac{1}{r^p}\int_X \vint{B(y,r)}|f(x)-f(y)|^p\,d\mu(x)\,d\mu(y)\\
			&\le \limsup_{r\searrow 0}\frac{1}{r^p}\int_X \vint{B(y,r)}|f(x)-f(y)|^p\,d\mu(x)\,d\mu(y)
			\le CE_{f,p}(X)
		\end{split}
	\end{equation}
	for some constant $C$ depending only on $p$, the doubling constant of the measure, and the constants in the
	Poincar\'e inequality.
\end{corollary}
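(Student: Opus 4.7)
The plan is to derive Corollary \ref{cor:Gorny} as a direct application of Theorem \ref{thm:main}, applied to the whole space $\Omega = X$, which trivially serves as a strong $p$-extension domain (take $F = f$ and note that $\partial X = \emptyset$). The task reduces to finding a sequence of mollifiers $\{\rho_i\}$ satisfying conditions \eqref{eq:rho hat minorize}--\eqref{eq:radius one condition} such that the doubly integrated functional in \eqref{eq:main theorem equation} matches the corollary's expression. Given any sequence $r_i \searrow 0$ with $r_i \in (0,1]$, the natural choice is
\[
\rho_i(x,y) := \frac{d(x,y)^p}{r_i^p}\cdot\frac{\ch_{B(y,r_i)}(x)}{\mu(B(y,r_i))},
\]
because then $\rho_i(x,y)/d(x,y)^p$ is exactly the kernel $\ch_{B(y,r_i)}(x)/(r_i^p\mu(B(y,r_i)))$ implicit in the expression $r_i^{-p}\vint{B(y,r_i)}$. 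Thus the two functionals coincide identically, and a standard argument shows that the $\liminf$ and $\limsup$ over all $r \searrow 0$ are sandwiched between the corresponding sequential quantities coming from Theorem \ref{thm:main}.

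Two of the four conditions on the mollifiers are essentially free. The first option in \eqref{eq:rho hat minorize} holds with equality (taking $C_\rho = 1$ for this bound), so condition \eqref{eq:conditions on nui} plays no role here. Condition \eqref{eq:radius one condition} is also immediate: for any fixed $\delta > 0$, once $r_i < \delta$ the indicator $\ch_{B(y,r_i)}(x)$ vanishes outside $B(y,\delta)$, making both suprema identically zero for $i$ large.

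The main obstacle will be verifying the dyadic majorization \eqref{eq:rho hat majorize} with a uniform constant $C_\rho$ independent of $i$. The key observation is that $\rho_i$ is supported in $\{d(x,y) < r_i\}$, so if $x$ lies in the annulus $B(y,2^{-j+1})\setminus B(y,2^{-j})$, the constraint forces $2^{-j} < r_i$, and in particular $2^{-j+1} < 2r_i$. For such indices, doubling yields $\mu(B(y,2^{-j+1})) \le C_d\,\mu(B(y,r_i))$, and combining this with the crude pointwise bound $d(x,y)^p < 2^{(-j+1)p}$ on the annulus gives a majorization of the required form with coefficients proportional to $(2^{-j+1}/r_i)^p$, supported only on those $j$ with $2^{-j} < r_i$. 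A brief geometric-series computation will then show that $\sum_j d_{i,j}$ is controlled by a constant depending only on $p$ and $C_d$, which serves as $C_\rho$. Once the four conditions are established with a uniform constant, Theorem \ref{thm:main} delivers the two-sided comparison in the corollary.
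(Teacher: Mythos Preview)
Your proposal is correct and follows essentially the same route as the paper: the same choice of mollifiers $\rho_i(x,y)=r_i^{-p}d(x,y)^p\ch_{B(y,r_i)}(x)/\mu(B(y,r_i))$, the first option of \eqref{eq:rho hat minorize} for free, the trivial verification of \eqref{eq:radius one condition}, and the same dyadic bound $d_{i,j}\approx C_d(2^{-j+1}/r_i)^p$ for $j$ with $2^{-j}<r_i$, summed by a geometric series to at most $2^pC_d/(1-2^{-p})$. Your explicit remarks that $\Omega=X$ is trivially a strong $p$-extension domain and that the continuous $\liminf/\limsup$ follow from the sequential ones by choosing extremizing sequences are points the paper leaves implicit, but the argument is otherwise the same.
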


\begin{proof}
	This is obtained from Theorem \ref{thm:main} with the choice
	\[
	\rho_{i}(x,y)
	=r_i^{-p}d(x,y)^p\frac{\ch_{B(y,r_i)}(x)}{\mu(B(y,r_i))},
	\]
	where $r_i\searrow 0$ as $i\to\infty$.
	Now the first option of \eqref{eq:rho hat minorize} holds.
	For every $x,y\in X$ with $0<d(x,y)\le 1$, we have
	\[
	\rho_{i}(x,y)
	\le \sum_{j=1}^{\infty}d_{i,j}\frac{\ch_{B(y,2^{-j+1})
			\setminus B(y,2^{-j})}(x)}{\mu(B(y,2^{-j+1}))}
	\]
	with $d_{i,j}= r_i^{-p}2^{(-j+1)p}\mu(B(y,2^{-j+1})) \mu(B(y,r_i))^{-1}$
	for $j\ge -\log_2 r_i$, and $d_{i,j}=0$ otherwise.
	Now
	\begin{align*}
	\sum_{j=1}^{\infty}d_{i,j}
	&= r_i^{-p} \sum_{j\ge -\log_2 r_i}  2^{(-j+1)p}\mu(B(y,2^{-j+1})) \mu(B(y,r_i))^{-1}\\
	&\le  C_d r_i^{-p} \sum_{j\ge -\log_2 r_i}  2^{(-j+1)p}\\
	&\le  2^p C_d,
	\end{align*}
	and thus \eqref{eq:rho hat majorize} is satisfied.
	The condition \eqref{eq:radius one condition} obviously holds.
\end{proof}

The following simple choice of mollifiers, considered in the Euclidean setting e.g. by Brezis
\cite[Eq. (45)]{Bre}, is also natural.
This will be used also in a counterexample in the last section.
\begin{corollary}\label{cor:Brezis}
	Suppose $X$ supports a $(p,p)$--Poincar\'e inequality. Let $f\in L^p(X)$. Then
	\begin{equation}
		\begin{split}
			&C^{-1}E_{f,p}(X)
			\le \liminf_{r\searrow 0}\int_X \vint{B(y,r)}\frac{|f(x)-f(y)|^p}{d(x,y)^p}\,d\mu(x)\,d\mu(y)\\
			&\qquad \le \limsup_{r\searrow 0}\int_X \vint{B(y,r)}\frac{|f(x)-f(y)|^p}{d(x,y)^p}\,d\mu(x)\,d\mu(y)
			\le CE_{f,p}(X).
		\end{split}
	\end{equation}
	for some constant $C$ depending only on $p$, the doubling constant of the measure, and the constants in the Poincar\'e inequality.
\end{corollary}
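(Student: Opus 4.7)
The plan is to deduce Corollary \ref{cor:Brezis} directly from Theorem \ref{thm:main} by choosing
\[
\rho_i(x,y):=\frac{\ch_{B(y,r_i)}(x)}{\mu(B(y,r_i))},\qquad r_i\searrow 0,
\]
since with this choice the integrand $|f(x)-f(y)|^p\rho_i(x,y)/d(x,y)^p$ reproduces exactly the integrand of the corollary. All the work then reduces to verifying the three hypotheses \eqref{eq:rho hat minorize}--\eqref{eq:radius one condition} on this family.

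The minorization \eqref{eq:rho hat minorize} is immediate from its first alternative: whenever $x\in B(y,r_i)$ one has $d(x,y)<r_i$, hence
\[
\rho_i(x,y)=\frac{1}{\mu(B(y,r_i))}\ge\frac{d(x,y)^p}{r_i^p}\cdot\frac{\ch_{B(y,r_i)}(x)}{\mu(B(y,r_i))},
\]
so the estimate holds with $C_\rho^{-1}=1$. The support condition \eqref{eq:radius one condition} is trivial, because $\rho_i(x,y)=0$ as soon as $d(x,y)\ge r_i$, so for $i$ large enough that $r_i<\delta$ both suprema vanish identically.

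The real content lies in checking the annular upper bound \eqref{eq:rho hat majorize}. For a fixed pair $(x,y)$ with $x\in B(y,r_i)$, only the unique dyadic annulus $B(y,2^{-j+1})\setminus B(y,2^{-j})$ containing $x$ contributes to the sum on the right, so the inequality reduces to
\[
d_{i,j}\ge\frac{\mu(B(y,2^{-j+1}))}{\mu(B(y,r_i))}\qquad\textrm{for every }j>-\log_2 r_i,
\]
uniformly in $y\in X$, with $\sum_j d_{i,j}\le C_\rho$. The doubling inequality alone only delivers the trivial uniform bound $C_d$ for this ratio, which would force $\sum_j d_{i,j}=\infty$ since the admissible $j$ form an infinite tail. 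The resolution I would use is reverse doubling, which is available because $X$ is connected and $\mu$ is doubling: there exist $\sigma>0$ and $C>0$, depending only on $C_d$, such that $\mu(B(y,r))/\mu(B(y,R))\le C(r/R)^{\sigma}$ whenever $0<r\le R$. Taking $R=r_i$ and $r=2^{-j+1}$, I would set $d_{i,j}:=C\,(2^{-j+1}/r_i)^{\sigma}$ for $j$ with $2^{-j+1}\le r_i$, together with the single boundary value $d_{i,j_0}:=C_d$ at $j_0:=\lceil-\log_2 r_i\rceil$; the pointwise inequality then holds, and $\sum_j d_{i,j}$ collapses to a geometric series bounded by a constant $C_\rho$ independent of $i$ and $y$.

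This appeal to reverse doubling is the main obstacle and the essential new feature compared with Corollary \ref{cor:Gorny}, where the built-in factor $d(x,y)^p/r_i^p$ supplies the required geometric decay automatically through the $2^{(-j+1)p}$ term, bypassing any discussion of the geometry of $X$ beyond doubling. Once the three conditions are in hand, Theorem \ref{thm:main} yields the two-sided bound of Corollary \ref{cor:Brezis} with a constant depending only on $p$, $C_d$, $C_P$ and $\lambda$.
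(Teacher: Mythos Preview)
Your proof is correct and follows the paper's route: the same mollifiers $\rho_i(x,y)=\ch_{B(y,r_i)}(x)/\mu(B(y,r_i))$, and the same trivial verification of \eqref{eq:rho hat minorize} and \eqref{eq:radius one condition}. For \eqref{eq:rho hat majorize} the paper also exploits connectedness, but instead of citing reverse doubling it proves the annulus estimate $\mu(B(y,2^{-j+1})\setminus B(y,2^{-j}))\ge C_d^{-3}\mu(B(y,2^{-j+1}))$ directly (by choosing a point on $\partial B(y,\tfrac{3}{2}\cdot 2^{-j})$) and then telescopes to obtain $\sum_{j\ge -\log_2 r_i}\mu(B(y,2^{-j+1}))\le C_d^3\mu(B(y,2r_i))\le C_d^4\mu(B(y,r_i))$; since that annulus estimate is precisely the mechanism behind reverse doubling, the two arguments are the same idea in slightly different packaging.
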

\begin{proof}
	This is obtained from Theorem \ref{thm:main} with the choice
	\[
	\rho_{i}(x,y)=\frac{\ch_{B(y,r_i)}(x)}{\mu(B(y,r_i))},
	\]
	where $r_i\searrow 0$ as $i\to\infty$.
	Again the first option of \eqref{eq:rho hat minorize} holds.
	We can assume that $r_i<\min\{1,\diam X/4\}$ for all $i\in\N$.
	For every $x,y\in X$ with $0<d(x,y)\le 1$, we have
	\[
	\rho_{i}(x,y)
	\le \sum_{j=1}^{\infty}d_{i,j}\frac{\ch_{B(y,2^{-j+1})\setminus B(y,2^{-j})}(x)}{\mu(B(y,2^{-j+1}))},
	\]
	where $d_{i,j}= \mu(B(y,2^{-j+1})) \mu(B(y,r_i))^{-1}$ for $j\ge -\log_2 r_i$ and $d_{i,j}=0$
	otherwise.
	
	Since $X$ is connected, there exists $z\in \partial B(y, \frac{3}{2}\cdot 2^{-j})$ for all
	$j\ge -\log_2 r_i$, and so
	\[
	B(z, 2^{-j-1})\subset B(y,2^{-j+1})\setminus B(y,2^{-j})\quad\textrm{and}\quad B(y,2^{-j+1})\subset B(z,2^{-j+2}).
	\]
	It follows that
	\[
	\mu({B(y,2^{-j+1})\setminus B(y, 2^{-j})})\ge C_d^{-3}\mu(B(y,2^{-j+1})),
	\]
	and so
	
	\begin{align*}
	\sum_{j=1}^{\infty}d_{i,j}
	&=\mu(B(y,r_i))^{-1} \sum_{j\ge -\log_2 r_i} \mu(B(y,2^{-j+1}))\\
	&\le C_d^3\mu(B(y,r_i))^{-1} \sum_{j\ge -\log_2 r_i} \mu({B(y,2^{-j+1})\setminus B(y, 2^{-j})})\\
	&\le C_d^3\mu(B(y,r_i))^{-1} \mu(B(y,2r_i))\\
	&\le C_d^4,
	\end{align*}
	satisfying \eqref{eq:rho hat majorize}.
	The condition \eqref{eq:radius one condition} obviously holds.
\end{proof}

\section{A counterexample}\label{sec:counterexample}

Recall that the conclusion of our main Theorem \ref{thm:main} has the form
\begin{equation}\label{eq:recite main result}
\begin{split}
C_1 E_{f,p}(\Om)
&\le \liminf_{i\to\infty}\int_{\Om}\int_{\Om} \frac{|f(x)-f(y)|^p}{d(x,y)^p}\rho_i(x,y)\,d\mu(x)\,d\mu(y)\\
&\le \limsup_{i\to\infty}\int_{\Om}\int_{\Om} \frac{|f(x)-f(y)|^p}{d(x,y)^p}\rho_i(x,y)\,d\mu(x)\,d\mu(y)
\le C_2 E_{f,p}(\Om).
\end{split}
\end{equation}
One natural question to ask is whether $C_1=C_2$ might hold.
G\'orny \cite{Gor} shows that with a suitable choice of the mollifiers
$\rho_i$, this holds in the case $1<p<\infty$ if we additionally assume that
at $\mu$-a.e. point $x\in X$, the tangent space is Euclidean with fixed dimension.
On the other hand, he gives an example where the dimension of the tangent space takes two different values
in two different parts of the space,
and then it is necessary to choose $C_1<C_2$.
In the example below, inspired by \cite[Example 4.8]{HKLL},
it is easy to check that the tangent space of $X$ is Euclidean with dimension $1$ at $\mu$-a.e. $x\in X$
(see definitions in \cite{Gor}), but nonetheless
we show in the case $p=1$ that $C_1<C_2$.

First consider the real line equipped with the Euclidean metric and the one-dimensional Lebesgue
measure $\mathcal L^1$.
Similarly to Corollary \ref{cor:Brezis}, consider the sequence of mollifiers
\[
\rho_i(x,y):=\frac{\ch_{[-1/i,1/i]}(|x-y|)}{2/i},\quad x,y\in \R,\quad i\in\N.
\]
From the Euclidean theory, see D\'avila \cite[Theorem 1.1]{Dav},
we know that for every $f\in L^1(\R)$, we have
\begin{equation}\label{eq:Euclidean 1d result}
\lim_{i\to\infty}\int_{\R}\int_{\R} \frac{|f(x)-f(y)|}{|x-y|}\rho_i(x,y)\,
d\mathcal L^1(x)\,d\mathcal L^1(y)
=\Vert Df\Vert(\R).
\end{equation}

\begin{example}
	Consider the space $X=[0,1]$, equipped with the Euclidean metric and a weighted measure
	$\mu$ that we will next define.
	First we construct a fat Cantor set $A$ as follows.
	Let $A_0:=[0,1]$.
	Then in each step
	$i\in\N$, we remove from $A_{i-1}$ the set $D_i$, which consists of $2^{i-1}$ open intervals of length $2^{-2i}$,
	centered at the middle points of the intervals that make up $A_{i-1}$.
	We denote $L_i:=\mathcal L^1(A_i)$,
	and we let $A=\bigcap_{i=1}^{\infty}A_i$. Then we have
	\[
	L:=\mathcal L^1(A) =\lim_{i\to\infty}L_i=1/2.
	\]
	Then define the weight
	\[
	w:=
	\begin{cases}
	2\quad\textrm{in }A,\\
	1\quad\textrm{in }X\setminus A,
	\end{cases}
	\]
	and equip the space $X$ with the weighted Lebesgue measure $d\mu:=w\,d\mathcal L^1$.
	Obviously the measure is doubling, and $X$ supports a $(1,1)$--Poincar\'e inequality.
	
	Let
	\[
	g:=2\ch_{A} \quad\textrm{and}\quad g_i=\frac{1}{L_{i-1}-L_i}\ch_{D_i},\ \ i\in\N.
	\]
	Then
	\[
	\int_0^1 g(s)\,ds= \int_0^1 g_i(s)\,ds=1\quad\textrm{for all }i\in\N.
	\]
	Next define the function
	\[
	f(x)=\int_0^x g(s)\, ds,\quad x\in [0,1].
	\]
	Now $f\in \Lip(X)$, since $g$ is bounded.
	Approximate $f$ with the functions
	\[
	f_i(x)=\int_0^x g_i(s)\, ds,\quad x\in [0,1],\quad i\in\N.
	\]
	Now also $f_i\in \Lip(X)$, and $f_i\to f$ uniformly.
	This can be seen as follows. Given $i\in\N$, the set $A_i$ consists of $2^i$ intervals of length
	$L_i/2^i$. If $I$ is one of these intervals, we have
	\[
	2^{-i}=\int_I g(s) \,ds =\int_I g_{i+1}(s) \,ds,
	\]
	and also
	\[
	\int_{X\setminus A_i}g\,d\mathcal L^1 =0=\int_{X\setminus A_i}g_{i+1}\,d\mathcal L^1 .
	\]
	Hence $f_{i+1}=f$ in  $X\setminus A_i$, and elsewhere $|f_{i+1}-f|$ is at most $2^{-i}$.
	In particular, $f_i\to f$ in $L^1(X)$ and so
	\[
	\Vert Df\Vert(X)\le \lim_{i\to\infty} \int_0^1 g_i\,d\mu
	= \lim_{i\to\infty} \int_0^1 g_i\,d\mathcal L^1=1.
	\]
	For a.e. $x\in A$, $f$ is differentiable at $x$ and so we have
	\[
	\lim_{i\to\infty}\int_X \frac{|f(x)-f(y)|}{|x-y|}\rho_i(x,y)\,d\mathcal L^1(x)=|f'(y)|.
	\]
	Thus
	\begin{align*}
	&\liminf_{i\to\infty}\int_X \int_X\frac{|f(x)-f(y)|}{|x-y|}\rho_i(x,y)\,d\mu(x)\,d\mu(y)\\
	&\qquad \ge 2 \liminf_{i\to\infty}\int_A \int_X\frac{|f(x)-f(y)|}{|x-y|}\rho_i(x,y)\,d\mathcal L^1(x)\,d\mathcal L^1(y)\\
	&\qquad \ge 2\int_A \liminf_{i\to\infty} \int_X\frac{|f(x)-f(y)|}{|x-y|}\rho_i(x,y)\,d\mathcal L^1(x)\,d\mathcal L^1(y)\quad\textrm{by Fatou}\\
	&\qquad= 2\int_X |f'(y)|\,d\mathcal L^1(y)\\
	&\qquad=2.
	\end{align*}
We conclude that
\begin{equation}\label{eq:comparison with constant 2}
\liminf_{i\to\infty}\int_X \int_X\frac{|f(x)-f(y)|}{|x-y|}\rho_i(x,y)\,d\mu(y)\,d\mu(x)
\ge 2\Vert Df\Vert(X).
\end{equation}
On the other hand, consider any nonzero Lipschitz function $f_0$ supported in $(3/8,5/8)$.
For such a function, from \eqref{eq:Euclidean 1d result} we have
\[
	\lim_{i\to\infty}\int_{X}\int_{X} \frac{|f_0(x)-f_0(y)|}{|x-y|}\rho_i(x,y)\,d\mu(x)\,d\mu(y)
	=\Vert Df_0\Vert (X),
\]
since both sides are equal to the classical quantities, that is,
the quantities obtained when the measure $\mu$ is $\mathcal L^1$.
This combined with \eqref{eq:comparison with constant 2} shows that we cannot have
$C_1=C_2$ in \eqref{eq:recite main result}.
\end{example}


\begin{thebibliography}{ACMM}

\bibitem{AFP}L. Ambrosio, N. Fusco, and D. Pallara,
\textit{Functions of bounded variation and free discontinuity problems.}
Oxford Mathematical Monographs. The Clarendon Press, Oxford University Press, New York, 2000.

\bibitem{BB}A. Bj\"orn and J. Bj\"orn,
\textit{Nonlinear potential theory on metric spaces},
EMS Tracts in Mathematics, 17. European Mathematical Society (EMS), Z\"urich, 2011. xii+403 pp.

\bibitem{BBM}J. Bourgain, H. Brezis, and P. Mironescu,
\textit{Another look at Sobolev spaces},
Optimal control and partial differential equations, 439--455, IOS, Amsterdam, 2001.

\bibitem{Bre}H. Brezis,
\textit{How to recognize constant functions. A connection with Sobolev spaces},
Russian Math. Surveys 57 (2002), no. 4, 693--708

\bibitem{BN1} H. Brezis and H-M. Nguyen,
\textit{Two subtle convex nonlocal approximations of the BV-norm},
Nonlinear Anal. 137 (2016), 222--245. 

\bibitem{BN2} H. Brezis and H-M. Nguyen,
\textit{Non-convex, non-local functionals converging to the total variation},
C. R. Math. Acad. Sci. Paris 355 (2017), no. 1, 24--27.

\bibitem{BN3} H. Brezis and H-M. Nguyen,
\textit{The BBM formula revisited},
Atti Accad. Naz. Lincei Rend. Lincei Mat. Appl. 27 (2016), no. 4, 515--533.

\bibitem{Bre2} H. Brezis, J. Van Schaftingen, and P.-L. Yung,
\textit{A surprising formula for Sobolev norms},
 Proc. Nat. Acad. Sci. U.S.A 118 (2021), no. 8, Paper No. e2025254118, 6 pp.
 
 \bibitem{Bre3} H. Brezis, A. Seeger, J. Van Schaftingen, and P.-L. Yung,
 \textit{Families of functionals representing Sobolev norms} (submitted)
 
 \bibitem{Bre4}
 H. Brezis, A. Seeger, J. Van Schaftingen, and P.-L. Yung,
 \textit{Sobolev spaces revisited},
 Rend. Accad. Lincei, (to appear).
 
\bibitem{CS} G.~E.~Comi and G.~Stefani,
\textit{A distributional approach to fractional Sobolev spaces and fractional variation: Existence of blow-up},
J.\ Funct.\ Anal.\ \textbf{277} (2019), no.\ 10, 3373--3435.


\bibitem{CS1} G.~E.~Comi and G.~Stefani, 
\textit{A distributional approach to fractional Sobolev spaces and fractional variation: Asymptotics I},
Rev.\ Mat.\ Complut.\ 
(2022),
DOI \href{https://doi.org/10.1007/s13163-022-00429-y}{10.1007/s13163-022-00429-y}.


\bibitem{CS2} E.~Bruè, M.~Calzi, M.~Comi and G.~Stefani,
\textit{A distributional approach to fractional Sobolev spaces and fractional variation: Asymptotics II} (2020).
Accepted paper, to appear on C.\ R.\ Math.
Preprint available at \href{https://arxiv.org/abs/2011.03928}{arXiv:2011.03928}.
 


\bibitem{Dav}J. D\'avila,
\textit{On an open question about functions of bounded variation},
Calc. Var. Partial Differential Equations 15 (2002), no. 4, 519--527.

\bibitem{DiMaS}S. Di Marino and M. Squassina,
\textit{New characterizations of Sobolev metric spaces},
J. Funct. Anal. 276 (2019), no. 6, 1853--1874.

\bibitem{GT} N. Garofalo, G. Tralli,
\textit{A new integral decoupling property of sub-Riemannian heat kernels and some notable consequences}. Preprint https://arxiv.org/abs/2205.04574

\bibitem{GT1} N. Garofalo, G. Tralli,
\textit{A Bourgain-Brezis-Mironescu-Davila theorem in Carnot groups of step two}. To appear in Communications in Analysis and Geometry.

\bibitem{Gor}W. G\'orny,
\textit{Bourgain-Brezis-Mironescu approach in metric spaces with Euclidean tangents},
J. Geom. Anal. 32 (2022), no. 4, Paper No. 128, 22 pp.

\bibitem{Hj}P. Haj\l{}asz,
\textit{Sobolev spaces on metric-measure spaces},
Heat kernels and analysis on manifolds, graphs, and metric spaces (Paris, 2002), 173--218,
Contemp. Math., 338, Amer. Math. Soc., Providence, RI, 2003.

\bibitem{HKLL} H. Hakkarainen, J. Kinnunen, P. Lahti, and P. Lehtel\"{a},
\textit{Relaxation and integral representation for functionals of linear growth on metric measure spaces},
Anal. Geom. Metr. Spaces 4 (2016), no. 1, 288--313.


\bibitem{HP} B.-Xian Han and A. Pinamonti,
\textit{On the asymptotic behaviour of the fractional Sobolev seminorms in metric measure spaces: Bourgain-Brezis-Mironescu's theorem revisited},
https://arxiv.org/abs/2110.05980

\bibitem{HKT}T. Heikkinen, P. Koskela, and H. Tuominen,
\textit{Sobolev-type spaces from generalized Poincar\'{e} inequalities},
Studia Math. 181 (2007), no. 1, 1--16.

\bibitem{HK}J. Heinonen and P. Koskela,
\textit{Quasiconformal maps in metric spaces with controlled geometry},
Acta Math. 181 (1998), no. 1, 1--61.

\bibitem{HKST}
J. Heinonen, P. Koskela, N. Shanmugalingam, and J. Tyson,
\textit{Sobolev spaces on metric measure spaces,
	An approach based on upper gradients},
New Mathematical Monographs, 27. Cambridge University Press, Cambridge, 2015. xii+434 pp. 

\bibitem{LS1}
G. Leoni, and D. Spector,
\textit{Characterization of {S}obolev and {$BV$} spaces},
J. Funct. Anal. 261 (2011), no. 10, 2926--2958.


\bibitem{LS2}
G. Leoni, and D. Spector,
\textit{Corrigendum to "Characterization of {S}obolev and  {$BV$} spaces},
J. Funct. Anal. 266 (2014), no. 2, 1106--1114.


\bibitem{MMS}N. Marola, M. Miranda, Jr., and N. Shanmugalingam,
\textit{Characterizations of sets of finite perimeter using heat kernels in metric spaces},
Potential Anal. 45 (2016), no. 4, 609--633.

\bibitem{Mir}M. Miranda, Jr.,
\textit{Functions of bounded variation on ``good'' metric spaces},
J. Math. Pures Appl. (9) 82  (2003),  no. 8, 975--1004.

\bibitem{Mun}V. Munnier,
\textit{Integral energy characterization of Haj\l{}asz-Sobolev spaces},
J. Math. Anal. Appl. 425 (2015), no. 1, 381--406.

\bibitem{N1} H. M. Nguyen,
\textit{ $\Gamma$-convergence, Sobolev norms, and BV functions}. Duke Math. J. 157 (2011), no. 3, 495--533. 

\bibitem{NPVS} H. M. Nguyen, A. Pinamonti, M. Squassina, and E. Vecchi
\textit{Some characterizations of magnetic Sobolev spaces}.  Complex Var. Elliptic Equ. 65 (2020), no. 7, 1104--1114.

\bibitem{PVS1} A. Pinamonti, M. Squassina, and E. Vecchi, 
\textit{Magnetic BV-functions and the Bourgain-Brezis-Mironescu formula},
Adv. Calc. Var. 12 (2019), no. 3, 225--252.

\bibitem{Pon}A. Ponce,
\textit{A new approach to Sobolev spaces and connections to $\Gamma$-convergence},
Calc. Var. Partial Differential Equations 19 (2004), no. 3, 229--255.

\bibitem{Rud}W. Rudin,
\textit{Functional analysis.}
Second edition. International Series in Pure and Applied Mathematics. McGraw-Hill, Inc., New York, 1991. xviii+424 pp.

\bibitem{S}N. Shanmugalingam,
\textit{Newtonian spaces: An extension of Sobolev spaces to metric measure spaces},
Rev. Mat. Iberoamericana 16(2) (2000), 243--279.

\end{thebibliography}
\end{document}